\newtheorem{Theorem}{Theorem}[section]
\newtheorem{Lemma}[Theorem]{Lemma}
\newtheorem{Remark}[Theorem]{Remark}
\numberwithin{equation}{section} \allowdisplaybreaks
\renewcommand\abstract{{\bf Abstract}}
\begin{document}
\title{Uniqueness of critical points of solutions to the mean curvature equation with Neumann and Robin boundary conditions\footnote{\footnotesize The work is supported by National Natural Science Foundation of China (No.11401307, No.11401310), High level talent research fund of Nanjing Forestry University (G2014022) and Postgraduate Research \& Practice Innovation Program of Jiangsu Province (KYCX17\_0321). The second author is sponsored by Qing Lan Project of Jiangsu Province.}}

\author{Haiyun Deng$^{1}$\footnote{\footnotesize Corresponding author E-mail: haiyundengmath1989@163.com, Tel.: +86 15877935256.}, Hairong Liu$^{2}$ \& Long Tian$^{1}$\\[12pt]
\small \emph {$^{1}$School of Science, Nanjing University of Science and Technology, Nanjing, 210094, China;}\\
\small \emph {$^{2}$School of Science, Nanjing Forestry University, Nanjing, 210037, China}\\}
\date{}
\maketitle

\renewcommand{\labelenumi}{[\arabic{enumi}]}

\begin{abstract}{\bf:}
 In this paper, we investigate the critical points of solutions to the prescribed constant mean curvature equation with Neumann and Robin boundary conditions respectively in a bounded smooth convex domain $\Omega$ of $\mathbb{R}^{n}(n\geq2)$. Firstly, we show the non-degeneracy and uniqueness of the critical points of solutions in a planar domain by using the local Chen \& Huang's comparison technique and the geometric properties of approximate surfaces at the non-degenerate critical points. Secondly, we deduce the uniqueness and non-degeneracy of the critical points of solutions in a rotationally symmetric domain of $\mathbb{R}^{n}(n\geq3)$ by the projection of higher dimensional space onto two dimensional plane.
\end{abstract}

{\bf Key Words:} prescribed constant mean curvature equation, critical point, uniqueness, non-degeneracy.

{{\bf 2010 Mathematics Subject Classification.}  35J93; 35J25; 35B38. }

\section{Introduction and main results}
~~~~In this paper we consider the following mean curvature equation
\begin{equation}\label{1.1}\begin{array}{l}
\mbox {div}(\frac{\nabla u}{\sqrt{1+|\nabla u|^2}})=H~~\mbox{in}~~\Omega,
\end{array}\end{equation}
with Neumann and Robin boundary conditions respectively, where $H$ is a positive constant, $\Omega$ is a bounded smooth convex domain in $\mathbb{R}^{n}(n\geq2)$.

Critical points of solutions to elliptic equations is a significant research topic. In some cases, properties of critical points of solutions are themselves the main concern. In other cases, theory of critical points provide an important tool in the study of properties of solutions. There are many known results about the study of critical points. In 1971 Makar-Limanov \cite{Makar-Limanov} investigated the Poisson equation with constant inhomogeneous term in a planar convex domain, and proved that $u$ has one unique critical point. In 1992 Alessandrini and Magnanini \cite{AlessandriniMagnanini1} considered the geometric structure of the critical set of a solution to semilinear elliptic equation in a nonconvex domain $\Omega,$ whose boundary is composed of finite simple closed curves. They deduced that the critical set is made up of finitely many isolated critical points. In 2012 Arango and G\'{o}mez \cite{ArangoGomez2} considered the critical points of the solutions for quasilinear elliptic equations with Dirichlet boundary condition in strictly convex domains and nonconvex domains respectively. If the domain is strictly convex and $u$ is a negative solution, they proved that the critical set has exactly one non-degenerate critical point. On the other hand, they obtained the similar results of the semilinear case for a planar annular domain, whose boundary has nonzero curvature. In 2017 Deng, Liu and Tian \cite{Deng2} investigate the geometric structure of interior critical points of solutions $u$ to a quasilinear elliptic equation with nonhomogeneous Dirichlet boundary conditions in a simply connected or multiply connected domain $\Omega$ in $\mathbb{R}^2$. They develop a new method to prove $\Sigma_{i = 1}^k {{m_i}}+1=N$ or $\Sigma_{i = 1}^k {{m_i}}=N,$ where $m_1,\cdots,m_k$ are the respective multiplicities of interior critical points $x_1,\cdots,x_k$ of $u$ and $N$ is the number of global maximum points of $u$ on $\partial\Omega$. All the above results involved the critical points of solutions to elliptic equations with Dirichlet boundary condition in the planar domains. In addition, a number of other authors have investigated this problem and some other related problems (see \cite{AlessandriniMagnanini2,Alessandrini1,Alessandrini2,Alessandrini3,Cecchini,Enciso1,Enciso2,Kawohl,Kraus,Lopez,Zhu}). However, the critical set $K$ has not been fully considered for some general cases, especially for higher dimensional spaces, Neumann and Robin boundary value problems.

For higher dimensional cases, there has few results about the critical points of solutions to elliptic equations. In 1998 Cabr\'{e} and Chanillo \cite{CabreChanillo}, under the assumption of the existence of a semi-stable solution, showed that the solution of Poisson equation $-\triangle u=f(u)$ has exactly one non-degenerate critical point in a smooth bounded convex domain of $\mathbb{R}^{n}(n\geq2)$. Recently, the authors \cite{Deng1} investigated the geometric structure of critical points of solutions to mean curvature equations with Dirichlet boundary condition over a smooth bounded domain $\Omega$ in $\mathbb{R}^n(n\geq 2).$

 Concerning the Neumann and Robin boundary value problems, the critical points of solutions to elliptic equations seems to be less considered. In 1990, Sakaguchi \cite{Sakaguchi} proved that the solutions of Poisson equation with Neumann and Robin boundary conditions respectively exist exactly one critical point. The goal of this paper is to obtain some results about the critical set of solutions to mean curvature equation with Neumann and Robin boundary conditions respectively in bounded smooth convex domains $\Omega$ of $\mathbb{R}^{n}(n\geq2)$.

Throughout this paper, we shall suppose that $\Omega$ is a bounded smooth convex domain. Our theorems concerns only qualitative properties of the critical points of solutions to the prescribed constant mean curvature equation, i.e., the uniqueness and non-degeneracy. Hence we only need the hypothesis of the existence of solutions. The existence of solutions can be seen in \cite{Gilbarg,MaXu,PucciSerrin} etc. Our main results are as follows:
\begin{Theorem}\label{th1.1} 
Let $\Omega$ be a bounded smooth convex domain in $\mathbb{R}^2.$ Suppose that $H$ is a positive constant and that $u$ is a solution of the following boundary value problem
\begin{equation}\label{1.2}
\left\{
\begin{array}{l}
{\rm div}(\frac{\nabla u}{\sqrt{1+|\nabla u|^2}})=H~~~\mbox{in}~\Omega,\\
\frac{\partial u}{\partial \overrightarrow{n}}=c ~~~\mbox{on}~ \partial\Omega,
\end{array}
\right.
\end{equation}
where $\overrightarrow{n}$ is the unit outward normal vector of $\partial\Omega$ and $c$ is a positive constant. Then $u$ has exactly one critical point $p$ in $\Omega$ and $p$ is a non-degenerate interior minimal point of $u$.
\end{Theorem}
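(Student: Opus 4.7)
The plan is to establish three facts in order: (i) the critical set $K=\{\nabla u=0\}$ is nonempty; (ii) every $p\in K$ is a non-degenerate strict local minimum of $u$; (iii) $K$ cannot contain two distinct points. Step (i) is elementary: because $\partial u/\partial\overrightarrow{n}=c>0$ on all of $\partial\Omega$, the Hopf-type lemma for the mean curvature operator prevents $u$ from attaining its infimum on the boundary, so there is some interior point $p\in\Omega$ with $u(p)=\min_{\overline\Omega}u$, and automatically $p\in K$.

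For step (ii), fix any $p\in K$. Evaluating \eqref{1.1} at $p$ gives $\Delta u(p)=H>0$, so $\operatorname{tr} D^2u(p)>0$. To upgrade this to non-degeneracy and to classify $p$ as a local minimum, I would apply the local Chen--Huang comparison technique advertised in the abstract: near the horizontal tangency $(p,u(p))$, introduce two spherical-cap graphs $v_\pm$ of constant mean curvature $H$ tangent to the graph of $u$ at $(p,u(p))$ from above and from below. Both $u$ and $v_\pm$ solve the same quasilinear equation, so each difference $w_\pm=u-v_\pm$ satisfies a linear elliptic equation in a neighborhood of $p$. Combining the strong maximum principle with the explicit second-order Taylor expansion of $v_\pm$ at $p$ then yields pinching inequalities that force $D^2u(p)$ to be strictly positive definite. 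Consequently every $p\in K$ is a non-degenerate strict local minimum, and no element of $K$ can be a saddle or local maximum.

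For step (iii), assume for contradiction that $K$ contains two distinct points $p_1,p_2$; by step (ii) both are strict local minima. Joining them by continuous paths inside the convex (hence simply connected) domain $\Omega$ and taking the minimax value $c^{*}=\inf_\gamma\max_t u(\gamma(t))$ produces a mountain-pass critical point $q\in K$; the boundary condition $c>0$ keeps the associated negative-gradient flow away from $\partial\Omega$, so $q$ is genuinely interior. But $q$ cannot be a local minimum, contradicting step (ii). Hence $K=\{p\}$.

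The main obstacle, I expect, is step (ii). The Chen--Huang spherical-cap comparison at a horizontal tangency is delicate because the classical tangency principle for constant mean curvature surfaces delivers only a one-sided contact statement, whereas here one must extract quantitative strict positive-definiteness of the Hessian. Executing this requires carefully expanding the caps $v_\pm$ to second order, controlling the coefficients of the linearized operator satisfied by $w_\pm$, and using the convexity of $\Omega$ to exclude the alternative scenario in which the comparison degenerates at $\partial\Omega$.
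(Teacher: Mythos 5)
There is a genuine gap, and it is concentrated exactly where you suspect: step (ii). A local comparison at a critical point $p$ cannot force $D^2u(p)$ to be positive definite. At such a $p$ the equation only gives $\operatorname{tr}D^2u(p)=H>0$, which is compatible with a saddle (eigenvalues $\lambda_1>0>\lambda_2$, $\lambda_1+\lambda_2=H$). If you place a spherical cap $v$ of the same mean curvature tangent to the graph at $(p,u(p))$, the difference $w=u-v$ vanishes to first order with trace-free Hessian $D^2u(p)-\tfrac{H}{2}I$; when $p$ is a saddle of $u$ this Hessian is non-degenerate and indefinite, so $w$ simply has an ordinary saddle at $p$ with a four-sector nodal structure. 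That violates no maximum principle and produces no pinching inequality, so the ``upgrade to strict positive-definiteness'' you describe does not exist as a local statement. The Chen--Huang technique, as the paper uses it (Lemma \ref{le2.1}, Lemma \ref{le3.3}), only detects the \emph{order of contact}: when the contact is of second order (degenerate Hessian of the difference) one gets $k\geq 3$ and at least six nodal sectors, and the contradiction is then extracted \emph{globally}, by tracking how the components of $\{u-v>0\}$ and $\{u-v<0\}$ must meet $\partial\Omega$ and using convexity plus the Neumann condition to bound the number of sign changes of $\partial(u-v)/\partial\overrightarrow{n}$ on the boundary. This rules out $\det D^2u(p)=0$ only; it does not rule out saddles.

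Consequently your step (iii) has nothing to lean on: the mountain-pass argument correctly shows that two minima force a non-minimum critical point, but you have not excluded such points. The paper closes this loop with machinery you omit entirely: Lemma \ref{le3.6} establishes the equivalence ``a saddle exists iff $u$ has at least two interior minima'' (the ``only if'' direction requires extending $u$ outside $\Omega$ and analyzing sub-level sets), and Section 4 runs a continuation argument in a parameter $t\in[0,1]$ deforming the problem to $t=0$, where positivity of the Gaussian curvature at critical points \emph{can} be proved (Lemma \ref{le4.3}), and then shows by openness/closedness (Lemma \ref{le4.4}) that the number of minima stays equal to one along the whole deformation. Some global input of this kind is unavoidable; your proposal as written would, if valid, prove uniqueness of critical points by purely local reasoning, which is too strong. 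To repair it you would need to replace step (ii) by the paper's weaker non-degeneracy statement and then supply an independent global argument (homotopy, level-set topology, or the paper's extension-plus-sublevel-set count) to exclude saddles.
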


\begin{Theorem}\label{th1.2} 
Let $\Omega$ be a bounded smooth convex domain in $\mathbb{R}^2.$ Suppose that $H$ is a positive constant. Let $u$ be a solution of the following boundary value problem
\begin{equation}\label{1.3}
\left\{
\begin{array}{l}
{\rm div}(\frac{\nabla u}{\sqrt{1+|\nabla u|^2}})=H~~~\mbox{in}~\Omega,\\
\frac{\partial u}{\partial \overrightarrow{n}}+\alpha u=0 ~~~\mbox{on}~ \partial\Omega,
\end{array}
\right.
\end{equation}
where $\alpha$ is a positive constant. Then $u$ has exactly one critical point $p$ in $\Omega$ and $p$ is a non-degenerate interior minimal point of $u$.
\end{Theorem}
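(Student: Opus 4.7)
The plan is to reduce Theorem \ref{th1.2} to the setting of Theorem \ref{th1.1} by first fixing the sign of $u$, and then importing the comparison machinery already developed for the Neumann case.

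\emph{Step 1 (sign of $u$ and boundary derivative).} I would first show $u<0$ in $\Omega$. Let $x_{0}\in\overline{\Omega}$ be a point where $u$ attains its maximum. If $x_{0}\in\Omega$, then $\nabla u(x_{0})=0$ and the Hessian of $u$ is negative semi-definite, so the left-hand side of the equation reduces to $\Delta u(x_{0})\le 0$, contradicting $H>0$. Hence $x_{0}\in\partial\Omega$, and there the outward normal derivative is non-negative, i.e.\ $-\alpha u(x_{0})\ge 0$, which yields $u(x_{0})\le 0$ and therefore $u\le 0$ on $\overline{\Omega}$. Since $u\equiv \text{const}$ would force $H=0$, the strong maximum principle for the quasilinear mean-curvature operator forces $u<0$ in $\Omega$. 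Consequently $\partial u/\partial\vec{n}=-\alpha u>0$ pointwise on $\partial\Omega$, which is the precise analogue of the positive Neumann datum $c>0$ in Theorem \ref{th1.1} (although here it varies along $\partial\Omega$). In particular, the Hopf-type boundary lemma rules out critical points on $\partial\Omega$, so any critical point is interior and at least one interior minimum exists.

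\emph{Step 2 (non-degeneracy and uniqueness).} With $\partial u/\partial\vec{n}>0$ in hand, I would run the scheme of Theorem \ref{th1.1}. At any critical point $p$, the equation collapses to $\Delta u(p)=H>0$, so $\mathrm{tr}\,D^{2}u(p)>0$. The non-degeneracy follows from the local Chen--Huang comparison: compare the graph $\{(x,u(x))\}$ near $(p,u(p))$ with the lower cap of a sphere of radius $2/H$ tangent to the graph horizontally at that point, both surfaces having the same (scalar) constant mean curvature $H$; a degenerate Hessian would force second-order contact, and then the strong maximum principle for the linearised minimal-surface operator would make the graph coincide with the spherical cap on a neighbourhood, which is incompatible with the graph extending continuously to $\partial\Omega$ with the Robin datum on a bounded convex $\Omega$. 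For uniqueness I argue by contradiction: if two critical points $p_{1},p_{2}$ existed, both would be non-degenerate local minima by the previous step; but then, since $\partial\Omega$ contains no critical point and the gradient of $u$ points strictly outward there, a Morse-theoretic count of the level curves of $u$ in the planar domain would require an additional saddle critical point $q\in\Omega$, at which the same argument gives $\Delta u(q)=H>0$ yet $\det D^{2}u(q)<0$, which the Chen--Huang comparison again excludes.

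\emph{Main obstacle.} The delicate point is the non-degeneracy/uniqueness step: in the Neumann setting the constant value of $c$ calibrates the comparison sphere globally, whereas with Robin data the normal derivative varies along $\partial\Omega$. The remedy is to keep the Chen--Huang argument strictly local around a candidate critical point, exploiting only the sign of $\partial u/\partial\vec{n}$ (granted by Step 1) rather than its magnitude; together with convexity of $\Omega$ this is enough to prevent any limiting degenerate configuration, and to propagate the resulting rigidity to a global contradiction in the uniqueness step.
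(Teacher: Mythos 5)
Your Step 1 reproduces the paper's Lemma 3.1 and is correct: the maximum of $u$ must occur on $\partial\Omega$, the Hopf lemma combined with the Robin condition forces $u<0$, hence $\partial u/\partial\overrightarrow{n}=-\alpha u>0$ on $\partial\Omega$, and an interior minimum exists. The problems are in Step 2, and they are genuine gaps rather than omitted details.

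First, the non-degeneracy argument as you state it does not work. At a degenerate critical point $p$ the equation gives $\mathrm{tr}\,D^2u(p)=H$, so after a rotation $D^2u(p)=\mathrm{diag}[H,0]$; the lower cap of a sphere of radius $2/H$ has Hessian $\tfrac{H}{2}I$ at its bottom, so it does \emph{not} have second-order contact with the graph of $u$ at $p$. The correct comparison object (the paper's Lemma 3.4) is the one-dimensional solution $v(x)=X(x_1)$ with $X''(0)=H$, a cylindrical arc, whose Hessian is exactly $\mathrm{diag}[H,0]$. Moreover, even with the right surface, second-order contact plus the strong maximum principle does \emph{not} force local coincidence: $w=u-v$ solves a linear elliptic equation and vanishes to second order at $p$, but by Bers/Chen--Huang it vanishes only to some finite order $k\ge 3$, its nodal set near $p$ consists of $k$ arcs dividing a neighborhood into at least six sign-alternating sectors, and the contradiction in the paper comes from counting how often $\partial(u-v)/\partial\overrightarrow{n}$ (or the Robin combination $\partial(u-v)/\partial\overrightarrow{n}+\alpha(u-v)$) can change sign on the convex boundary --- at most four times --- against the at least six sectors that must reach $\partial\Omega$. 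Second, and more seriously, your exclusion of the saddle point $q$ is not a local matter: a point with $\nabla u(q)=0$, eigenvalues $\lambda_1>0>\lambda_2$ with $\lambda_1+\lambda_2=H$, is perfectly compatible with the equation, and no Chen--Huang comparison at $q$ rules it out because there is no explicit solution of the mean curvature equation with that Hessian to compare against. This is precisely the hard part of the theorem. The paper instead proves (Lemma 3.6) that a saddle exists if and only if there are at least two interior minima, and then runs a continuation argument (Section 4) through the family $\mathrm{div}(\nabla v/\sqrt{1+t^2|\nabla v|^2})=H$, $t\in[0,1]$: at $t=0$ the quadratic $v_0(0)+\tfrac12\lambda_1x_1^2+\tfrac12\lambda_2x_2^2$ is an exact solution of the limiting linear equation, so the boundary-counting argument excludes saddles there, and openness and closedness of the set of parameters with two or more minima transport the conclusion to $t=1$. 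Your proposal contains no substitute for this global step.
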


In addition, we will give partial results about the critical points of solutions in higher dimensional spaces.

\begin{Theorem}\label{th1.3} 
 Let $\Omega$ be a bounded smooth convex domain of rotational symmetry with respect to $x_n$ axis in $\mathbb{R}^n(n\geq3).$ Suppose that $u=u(|x'|,x_n)$ is an axisymmetric solution of equation (\ref{1.2}) or (\ref{1.3}), where $|x'|=\sqrt{x_1^2+x_2^2+\cdots+x_{n-1}^2}$. Then $u$ has exactly one critical point $p$ in $\Omega$ and $p$ is a non-degenerate interior minimal point of $u$.
\end{Theorem}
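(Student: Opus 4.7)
The plan is to slice $\Omega$ by a plane through the axis of symmetry, interpret the resulting trace of $u$ as a function on a two-dimensional convex domain, and then rerun, with mild modifications, the argument used in the proofs of Theorems \ref{th1.1} and \ref{th1.2}. First I would set
\begin{equation*}
\tilde\Omega := \{(t,x_n)\in\mathbb{R}^2 : (t,0,\ldots,0,x_n)\in\Omega\},
\end{equation*}
which, since $\Omega$ is convex and rotationally symmetric about the $x_n$-axis, is a bounded smooth convex planar domain symmetric about $\{t=0\}$. Define $\tilde u(t,x_n):=u(|t|,x_n)$; then $\tilde u\in C^2(\tilde\Omega)$ inherits the Neumann or Robin condition on $\partial\tilde\Omega$, and $\nabla u(x)=0$ at $x=(x',x_n)\in\Omega$ with $|x'|=t_0$ if and only if $\nabla\tilde u(t_0,x_n)=0$.

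By axisymmetry, the critical set of $u$ in $\Omega$ is a disjoint union of axial points $(0,\ldots,0,x_n^0)$ and off-axis $(n-2)$-spheres $\{|x'|=t_0,\ x_n=x_n^0\}$ with $t_0>0$. At any point of such an off-axis sphere there are $n-2$ tangent directions along which $u$ is locally constant, so $D^2u$ has at least $n-2$ zero eigenvalues there and the critical point is degenerate. Consequently, once every critical point of $u$ is shown to be non-degenerate, the critical set must lie on the $x_n$-axis, and it suffices to count axial critical points.

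To obtain non-degeneracy and uniqueness for $\tilde u$ on $\tilde\Omega$, I would rerun the planar argument. The function $\tilde u$ satisfies on $\tilde\Omega\cap\{t>0\}$ the axisymmetric reduction of \eqref{1.1},
\begin{equation*}
\mathrm{div}\!\left(\frac{\nabla\tilde u}{\sqrt{1+|\nabla\tilde u|^2}}\right) + \frac{n-2}{t}\cdot\frac{\tilde u_t}{\sqrt{1+|\nabla\tilde u|^2}} = H,
\end{equation*}
together with the symmetry condition $\tilde u_t=0$ on $\{t=0\}\cap\tilde\Omega$, which plays the role of a homogeneous Neumann condition on the axial segment. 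The local Chen--Huang comparison with spherical-cap graphs and the level-curve analysis at a non-degenerate critical point used in Theorems \ref{th1.1}--\ref{th1.2} rely only on the sign and quasilinear structure of the principal part, together with control of first-order perturbations; the extra term $(n-2)t^{-1}\tilde u_t/\sqrt{1+|\nabla\tilde u|^2}$ is first-order and shares the sign of $\tilde u_t$, so the comparison inequalities and the geometric conclusions about level sets transfer.

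The hard part will be handling the singular coefficient $(n-2)/t$ uniformly up to the axis: I need to verify that the comparison with approximate surfaces and the curvature estimates that rule out degeneracy remain valid as $t\to 0^+$, and that the level curves of $\tilde u$ near an axial critical point glue smoothly across $\{t=0\}$ into simple closed loops enclosing that point. Once this is established, the planar argument yields a unique non-degenerate critical point of $\tilde u$ in $\tilde\Omega$, which by reflection symmetry across the $x_n$-axis must lie on $\{t=0\}$. Pulling back to $\Omega$, the corresponding axis point $p$ acquires, by axisymmetry, $n-2$ further equal positive Hessian eigenvalues in the azimuthal directions, so $p$ is the unique critical point of $u$ in $\Omega$ and a non-degenerate interior minimum, as claimed.
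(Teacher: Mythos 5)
Your reduction to the planar slice $\tilde\Omega$ with $\tilde u(t,x_n)=u(|t|,x_n)$ is a genuinely different route from the paper's, but as written it contains a gap that you yourself flag and then do not close. The function $\tilde u$ does not satisfy the two-dimensional mean curvature equation to which Theorems \ref{th1.1}--\ref{th1.2} apply; it satisfies the axisymmetric reduction with the extra drift $\frac{n-2}{t}\,\tilde u_t/\sqrt{1+|\nabla\tilde u|^2}$, whose coefficient blows up exactly on the axis $\{t=0\}$ where the critical point is expected to lie. Every ingredient of the planar proof then needs to be re-justified for this degenerate equation: Lemma \ref{le2.1} rests on Bers' expansion for a linear elliptic equation with analytic (in particular bounded) coefficients, which fails for a first-order coefficient of size $t^{-1}$ at the origin of coordinates; the comparison functions of Lemma \ref{le3.4} and of Lemma \ref{le4.3} (the cylinder $X(x_1)$ and the paraboloid $q$) solve the planar equation but not the reduced equation, so the difference $\tilde u-v$ no longer satisfies a homogeneous equation of the form treated in Lemma \ref{le2.1}; and the homotopy $v_t$ of Section 4 would have to be rebuilt for the reduced operator. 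Saying that the extra term ``is first-order and shares the sign of $\tilde u_t$'' does not rescue any of these steps, because the structure used in Lemmas \ref{le2.1}, \ref{le3.3} and \ref{le4.3} is the exact cancellation of zeroth- and first-order data between two solutions of the \emph{same} equation, not a sign condition. Until the singular-coefficient versions of these lemmas are proved, the proposal is a program rather than a proof. There is also a minor logical wrinkle earlier: you first argue that off-axis critical spheres would be degenerate and hence excluded ``once every critical point is shown to be non-degenerate,'' but your non-degeneracy comes from the planar argument whose applicability is the very thing in question; the cleaner statement is simply that uniqueness for $\tilde u$ plus evenness in $t$ forces the critical point onto $\{t=0\}$.

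For contrast, the paper avoids the Chen--Huang machinery entirely in dimension $n\ge 3$. It uses the radial monotonicity $v_r>0$ for $r\neq 0$ (its (5.2)) to confine all critical points to the $x_n$-axis at the outset; it obtains uniqueness from the structure of the nodal set $\mathscr{N}=\{u_{x_n}=0\}$, whose planar trace $N_2$ cannot enclose a subdomain and meets the axis at exactly one point; and it obtains non-degeneracy by applying the Hopf lemma to each derivative $u_{x_k}$ on the half-domain $\{x_k>0\}\cap\Omega$ (giving $u_{x_kx_k}(p)>0$ for $k\le n-1$) and to $u_{x_n}$ across $\mathscr{N}$ (giving $u_{x_nx_n}(p)>0$), after observing that $D^2u(p)$ is diagonal by symmetry. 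If you want to salvage your approach, you would either need to establish the monotonicity (5.2) and then argue as the paper does, or prove an axis-regular version of the Bers expansion and of the comparison lemmas for the operator with the $\frac{n-2}{t}$ term.
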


The rest of this paper is organized as follows. In Section 2, we introduce the local Chen \& Huang's comparison technique. In Section 3, firstly, we prove that every interior critical point of $u$ is non-degenerate. Then, by the strong maximum principle and Hopf lemma, we show that $u$ does not have maximum points in $\Omega$ and that $u$ cannot have minimal points on $\partial\Omega$. Moreover, we prove the sufficient and necessary condition for existence of the saddle points of $u$. In Section 4, firstly, we show the uniqueness of the interior minimal points of $u$ by continuity argument. Then, by the sufficient and necessary condition for existence of the saddle points and the non-degeneracy of interior critical points in Section 3, we prove the uniqueness of the interior critical points of $u$. In Section 5, our main idea is to study the projection of higher dimensional spaces onto two dimensional plane. So we need to consider the domains $\Omega$ of revolution formed by taking a strictly convex planar domain about one axis. We deduce that $u$ has exactly one critical point $p$ in a bounded smooth strictly convex domain of $\mathbb{R}^{n}(n\geq3)$ and $p$ is a non-degenerate interior minimal point of $u$.

\section{Local Chen and Huang's comparison technique}
~~~~In order to obtain the non-degeneracy and uniqueness of the critical points of $u$ in planar domains. In this section, we will recall the key local Chen \& Huang's comparison technique in \cite{ChenHuang}. For the sake of completeness, in our setting, we will give a complete proof of Lemma 1 in \cite{ChenHuang}.

\begin{Lemma}\label{le2.1} 
 Suppose that $u, v$ satisfy the same constant mean curvature equation (\ref{1.1}). Without loss of generality, we suppose that $u,v$ have a second order contact at $Z_0=({x_{1}}_0,{x_{2}}_0,u({x_{1}}_0,{x_{2}}_0))$ with $({x_{1}}_0,{x_{2}}_0)=(0,0).$  Then by changing coordinate $(x_1,x_2)$ into $(\xi, \eta)$ linearly, the difference $u-v$ around $(\xi, \eta)=(0,0)$ is given by

\begin{equation}\label{2.1}\begin{array}{l}
u-v={\rm Re}(\rho\cdot(\xi+\eta i)^k)+o((\xi^2+\eta^2)^{\frac{k}{2}}),
\end{array}\end{equation}
where $k\geq 3,$ $\rho$ is a complex number and $\xi+\eta i$ is the complex coordinate.

\end{Lemma}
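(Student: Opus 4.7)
The plan is to view the difference $w=u-v$ as a nontrivial solution of a linear second-order elliptic equation and then to apply a Hartman--Wintner / Bers-type asymptotic expansion at the vanishing point. First I would subtract the two mean curvature equations. Writing $Q(u)=\mathrm{div}\bigl(\nabla u/\sqrt{1+|\nabla u|^2}\bigr)$ and integrating $\tfrac{d}{dt}Q(tu+(1-t)v)$ over $t\in[0,1]$, the identity $Q(u)-Q(v)=0$ turns into a linear PDE of the form
$$\sum_{i,j}a_{ij}(x)\,\partial_i\partial_j w+\sum_i b_i(x)\,\partial_i w=0,$$
with the symmetric matrix $(a_{ij}(x))$ uniformly elliptic and the coefficients $a_{ij},b_i$ smooth in a neighbourhood of the origin (the interior regularity of $u,v$ follows from standard elliptic theory for the mean curvature operator).

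Next I would freeze coefficients at the origin and perform a linear change of variables $(x_1,x_2)\mapsto(\xi,\eta)$ that brings $(a_{ij}(0))$ to the identity matrix. In the new coordinates the principal part of the operator at $(\xi,\eta)=(0,0)$ is the ordinary Laplacian, while the equation for $w$ retains its general second-order linear elliptic form with smooth coefficients.

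Then I would invoke the classical Hartman--Wintner theorem (as extended by Bers) on the asymptotic behaviour of solutions of linear elliptic equations in two variables near a zero: if $w\not\equiv 0$ satisfies such an equation and vanishes at $(0,0)$, there exists a finite positive integer $k$ and a nontrivial harmonic homogeneous polynomial $P_k$ of degree $k$ such that
$$w(\xi,\eta)=P_k(\xi,\eta)+o\bigl((\xi^2+\eta^2)^{k/2}\bigr).$$
Since the harmonic homogeneous polynomials of degree $k$ in two real variables are exactly the real parts of complex monomials, $P_k(\xi,\eta)=\mathrm{Re}\bigl(\rho(\xi+\eta i)^k\bigr)$ for some nonzero $\rho\in\mathbb{C}$. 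The hypothesis of second-order contact at $Z_0$ forces $w$ together with all its derivatives of order $\leq 2$ to vanish at the origin, which rules out $k\leq 2$; hence $k\geq 3$, yielding exactly (\ref{2.1}).

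The main obstacle is the clean justification of the Hartman--Wintner asymptotic expansion for the specific operator obtained by subtraction; the uniform ellipticity of $(a_{ij})$ and the smoothness of $a_{ij},b_i$ follow immediately from the fact that the linearisation of the mean curvature operator has the positive definite matrix $(1+|p|^2)^{-1/2}(\delta_{ij}-p_ip_j/(1+|p|^2))$ evaluated along a convex combination of $\nabla u$ and $\nabla v$, so that part requires no delicate calculation.
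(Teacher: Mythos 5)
Your proposal is correct and follows essentially the same route as the paper: subtracting the two equations via the integral of $\frac{d}{dt}Q(tu+(1-t)v)$ to obtain a linear uniformly elliptic equation for $w=u-v$, normalizing the principal part to the Laplacian at the origin by a linear change of variables, and invoking the Bers/Hartman--Wintner expansion to identify the leading term with a harmonic homogeneous polynomial of degree $k\geq 3$. No substantive differences from the paper's argument.
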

\begin{proof}[Proof] Since  $u, v$ satisfy the same constant mean curvature equation. Then we have
\begin{equation}\label{2.2}
\begin{split}
0&=(1+u_{x_1}^2+u_{x_2}^2)(u_{x_1x_1}+u_{x_2x_2})-(u_{x_1}^2u_{x_1x_1}+u_{x_2}^2u_{x_2x_2}+2u_{x_1}u_{x_2}u_{x_1x_2})-H(1+|\nabla u|^2)^{\frac{3}{2}}\\
&=(1+u_{x_2}^2)u_{x_1x_1}+(1+u_{x_1}^2)u_{x_2x_2}-2u_{x_1}u_{x_2}u_{x_1x_2}-H(1+|\nabla u|^2)^{\frac{3}{2}},
\end{split}
\end{equation}
and
\begin{equation}\label{2.3}
\begin{array}{l}
0=(1+v_{x_2}^2)v_{x_1x_1}+(1+v_{x_1}^2)v_{x_2x_2}-2v_{x_1}v_{x_2}v_{x_1x_2}-H(1+|\nabla v|^2)^{\frac{3}{2}}.
\end{array}
\end{equation}
Now we define $p(t),q(t),m(t),r(t),s(t)$ for $0\leq t\leq 1$ by
\begin{equation*}
\begin{array}{l}
p(t)=(1-t)v_{x_1x_1}+tu_{x_1x_1},~~q(t)=(1-t)v_{x_1x_2}+tu_{x_1x_2},~~m(t)=(1-t)v_{x_2x_2}+tu_{x_2x_2},\\
r(t)=(1-t)v_{x_1}+tu_{x_1},~~s(t)=(1-t)v_{x_2}+tu_{x_2},
\end{array}
\end{equation*}
and consider the following function
\begin{equation*}
\begin{array}{l}
W=W(p(t),q(t),m(t),r(t),s(t))=(1+s^2)p+(1+r^2)m-2rsq-H(1+r^2+s^2)^{\frac{3}{2}}.
\end{array}
\end{equation*}
 Let $w=u-v,$ therefore by (\ref{2.2}) minus (\ref{2.3}) we have
\begin{equation*}
\begin{split}
0&=W(p(1),q(1),m(1),r(1),s(1))-W(p(0),q(0),m(0),r(0),s(0))=\int_0^1 W_t dt\\
&=a_{11}w_{x_1x_1}+2a_{12}w_{x_1x_2}+a_{22}w_{x_2x_2}+b_{1}w_{x_1}+b_{2}w_{x_2},
\end{split}
\end{equation*}
where
\begin{equation*}
\begin{array}{l}
a_{11}=\int_0^1(1+s^2)dt,~~a_{12}=-\int_0^1rsdt,~~a_{22}=\int_0^1(1+r^2)dt,\\
b_1=\int_0^1[2(rm-sq)-3H\sqrt{1+r^2+s^2}r]dt,\\
b_2=\int_0^1[2(sp-rq)-3H\sqrt{1+r^2+s^2}s]dt.\\
\end{array}
\end{equation*}
Then $w$ satisfies the following equation
\begin{equation*}
\begin{array}{l}
Lw:=a_{11}w_{x_1x_1}+2a_{12}w_{x_1x_2}+a_{22}w_{x_2x_2}+b_{1}w_{x_1}+b_{2}w_{x_2}=0,
\end{array}
\end{equation*}
where $a_{12}^2-a_{11}a_{22}<0$ ensures the ellipticity of the equation $Lw=0.$ Next, the rest of proof is same to that in \cite{ChenHuang}. We transform $(x_1,x_2)$ into $(\xi,\eta)$
such that $\xi(0,0)=0, \eta(0,0)=0$ and at $(0,0)$
\begin{equation}\label{2.4}
\begin{array}{l}
Lw=(\frac{\partial^2}{\partial \xi^2}+\frac{\partial^2}{\partial \eta^2}+b_1'\frac{\partial}{\partial \xi}+b_2'\frac{\partial}{\partial \eta})w=0.
\end{array}
\end{equation}
Since the coefficients of $Lw$ and $w$ itself are analytic in $(x_1,x_2)$ as well as in $(\xi,\eta),$ then we get the following Taylor expansion around $(\xi,\eta)=(0,0)$ of $Lw$:
\begin{eqnarray}\label{2.5}
Lw&=&\Big\{(1+\alpha_{11}\xi+\beta_{11}\eta+O(\xi^2+\eta^2))\frac{\partial^2}{\partial \xi^2}+(1+\alpha_{22}\xi+\beta_{22}\eta+O(\xi^2+\eta^2))\frac{\partial^2}{\partial \eta^2}\nonumber\\
  &&+2(\alpha_{12}\xi+\beta_{12}\eta+O(\xi^2+\eta^2))\frac{\partial^2}{\partial \xi\partial\eta}+(\tau_1+\delta_1\xi+\lambda_1\eta+O(\xi^2+\eta^2))\frac{\partial}{\partial \xi}\\
 &&+(\tau_2+\delta_2\xi+\lambda_2\eta+O(\xi^2+\eta^2))\frac{\partial}{\partial \eta}\Big\}w.\nonumber
\end{eqnarray}
By Theorem I in \cite{Bers}, we have
\begin{equation}\label{2.6}
\begin{array}{l}
w(\xi,\eta)=\sum_{j=0}^{\infty}P_{k+j}(\xi,\eta),
\end{array}
\end{equation}
where $P_k(\xi,\eta)$ is a non-zero homogeneous polynomial in $(\xi,\eta)$ of degree $k$. By the assumption of $u$ and $v$ have a second order contact at $(0,0),$ we have $k\geq 3.$ By (\ref{2.5}) and (\ref{2.6}), the equation (\ref{2.4}) yields
\begin{equation*}
\begin{array}{l}
0=(\frac{\partial^2}{\partial \xi^2}+\frac{\partial^2}{\partial \eta^2})P_k+\{\mbox{terms of order}\geq k-1\}.
\end{array}
\end{equation*}
By the uniqueness of the power expansion, we show that $P_k$ is a harmonic homogeneous polynomial. Then
\begin{equation}\label{2.7}
\begin{array}{l}
P_k(\xi,\eta)={\rm Re}(\rho\cdot(\xi+\eta i)^k),
\end{array}
\end{equation}
where $\rho$ is a complex number, then (\ref{2.6}) and (\ref{2.7}) imply (\ref{2.1}).
\end{proof}

\begin{Lemma}\label{le2.2}
(see\cite[Lemma 2]{ChenHuang}) Suppose that $u=u(x_1,x_2)$ is a non-constant solution of the following homogeneous quasilinear elliptic equation
\[Lu=a_{11}u_{x_1x_1}+2a_{12}u_{x_1x_2}+a_{22}u_{x_2x_2}+b_{1}u_{x_1}+b_{2}u_{x_2}=0~~\mbox{in}~~\Omega,\]
where the coefficients ${a_{ij}} ~\mbox{and}~{b_i}~(i,j=1,2)$ are analytic. Then every interior critical point of $u$ is an isolated critical point.
\end{Lemma}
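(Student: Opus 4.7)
The plan is to reuse the machinery just developed for Lemma~\ref{le2.1}, namely Bers' local expansion theorem. Fix an interior critical point $p_0$ of $u$ and, without loss of generality, translate so that $p_0=(0,0)$ and replace $u$ by $w:=u-u(p_0)$; since $Lu=0$ contains no zero-order term, $w$ still satisfies $Lw=0$, and $w(0,0)=0$, $\nabla w(0,0)=0$.

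Next I would make the same analytic linear change of coordinates $(x_1,x_2)\mapsto(\xi,\eta)$ used in the proof of Lemma~\ref{le2.1}, so that at $(0,0)$ the principal symbol of $L$ becomes the identity and the equation takes the normalized form
\begin{equation*}
Lw=\Bigl(\tfrac{\partial^2}{\partial\xi^2}+\tfrac{\partial^2}{\partial\eta^2}+b_1'\tfrac{\partial}{\partial\xi}+b_2'\tfrac{\partial}{\partial\eta}+(\text{higher order coefficient perturbations})\Bigr)w=0.
\end{equation*}
Because the $a_{ij},b_i$ are analytic, the coefficients of this normalized operator are analytic in $(\xi,\eta)$, so $w$ itself is analytic, and Theorem~I of \cite{Bers} applies: there exists an integer $k\geq 1$ such that
\begin{equation*}
w(\xi,\eta)=\sum_{j=0}^{\infty}P_{k+j}(\xi,\eta),
\end{equation*}
where $P_k\not\equiv 0$ is a harmonic homogeneous polynomial of degree $k$. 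The vanishing $\nabla w(0,0)=0$ forces $k\geq 2$, and harmonicity gives
\begin{equation*}
P_k(\xi,\eta)=\mathrm{Re}\bigl(\rho\cdot(\xi+i\eta)^k\bigr),\qquad \rho\in\mathbb{C}\setminus\{0\}.
\end{equation*}

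The final step converts this expansion into isolation. Setting $z=\xi+i\eta$ and using the Cauchy--Riemann identities for the harmonic polynomial $P_k$, a direct computation yields
\begin{equation*}
\frac{\partial w}{\partial\xi}-i\frac{\partial w}{\partial\eta}=k\rho\,z^{k-1}+O(|z|^{k}),
\end{equation*}
since higher-order homogeneous terms contribute only $O(|z|^k)$ to the gradient. Factoring, I can write the right-hand side as $z^{k-1}\bigl(k\rho+O(|z|)\bigr)$. For $|z|$ sufficiently small and nonzero, $z^{k-1}\neq 0$ and $k\rho+O(|z|)\neq 0$ by $\rho\neq 0$, so $\nabla w\neq 0$ throughout a punctured neighborhood of the origin. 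Hence $p_0$ is an isolated critical point of $u$.

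The only delicate step is the appeal to Bers' expansion theorem, which requires analyticity (hence the hypothesis on the $a_{ij},b_i$) together with the normalization of the principal part to the Laplacian; once the expansion is in hand, the gradient calculation and the factoring argument are elementary. Note that the same reasoning simultaneously records the local \emph{multiplicity} $k-1$ of the critical point, information that will be useful later in the paper.
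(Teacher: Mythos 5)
Your argument is correct and is essentially the standard proof of this fact: the paper itself states Lemma~\ref{le2.2} without proof, citing \cite{ChenHuang}, and your route via the normalization of the principal part and Bers' expansion is exactly the machinery the paper deploys for Lemma~\ref{le2.1}, with the gradient identity $w_{\xi}-iw_{\eta}=k\rho z^{k-1}+o(|z|^{k-1})$ supplying the isolation. The only point worth making explicit is why $w$ cannot vanish to infinite order at the origin (so that a finite $k$ exists): since the coefficients are analytic, $w$ is real-analytic on the connected domain $\Omega$, and infinite-order vanishing would force $w\equiv 0$, contradicting that $u$ is non-constant.
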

\begin{Remark}\label{re2.3}
By the above two lemmas and the implicit function theorem, we can know that the nodal set $N\cap\Omega$ of $(u-v)$ consists of at least three smooth arcs intersecting at $(0,0)$ and dividing $\Omega$ into at least six sectors. Moreover, the nodal set $N$ of $(u-v)$ is globally a union of smooth arcs.
\end{Remark}

\section{The sufficient and necessary condition for existence of the saddle points}
~~~~In this section, firstly, we investigate the non-degeneracy of critical points in a planar bounded smooth convex domain $\Omega$ by using the local Chen \& Huang's comparison technique. Then we prove the sufficient and necessary condition for existence of the saddle points by using the geometric properties of approximate surfaces at the non-degenerate critical points.

\begin{Lemma}\label{le3.1} 
Suppose that $u$ is a solution to (\ref{1.3}). Then $u<0$ in $\overline{\Omega}$ and $\frac{\partial u}{\partial \overrightarrow{n}}>0$ on $\partial \Omega$.
\end{Lemma}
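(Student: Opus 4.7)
\emph{Proof plan.} The strategy is to show that the global maximum $M:=\max_{\overline{\Omega}}u$ is attained on $\partial\Omega$, and then to use Hopf's boundary point lemma together with the Robin condition to force $M<0$; both assertions of the lemma follow immediately.

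I would first rule out interior maxima. At any interior maximum $x_0\in\Omega$ we have $\nabla u(x_0)=0$ and $D^2u(x_0)\leq 0$, so
\[
H=\mbox{div}\Big(\tfrac{\nabla u}{\sqrt{1+|\nabla u|^2}}\Big)(x_0)=\Delta u(x_0)\leq 0,
\]
contradicting $H>0$. Hence $M$ is attained at some $x_0\in\partial\Omega$. Expanding equation (\ref{1.1}) in non-divergence form gives $a_{ij}(\nabla u)u_{ij}=H(1+|\nabla u|^2)$ with $a_{ij}(p):=\delta_{ij}-\tfrac{p_ip_j}{1+|p|^2}$, whose eigenvalues lie in $[(1+|p|^2)^{-1},1]$; therefore $L:=a_{ij}(\nabla u(x))\partial_{ij}$ is a linear uniformly elliptic operator in a neighborhood of $x_0$ (using that $\nabla u$ is bounded up to the boundary by standard quasilinear elliptic regularity for smooth data on a smooth domain). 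The function $v:=M-u\geq 0$ vanishes at $x_0$ and satisfies $Lv=-H(1+|\nabla u|^2)<0$ in $\Omega$, so Hopf's boundary point lemma (using the interior sphere property at the smooth boundary point $x_0$) yields $\tfrac{\partial v}{\partial\overrightarrow{n}}(x_0)<0$, equivalently $\tfrac{\partial u}{\partial\overrightarrow{n}}(x_0)>0$.

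The Robin condition at $x_0$ then gives $M=u(x_0)=-\alpha^{-1}\tfrac{\partial u}{\partial\overrightarrow{n}}(x_0)<0$ since $\alpha>0$, so $u<0$ throughout $\overline{\Omega}$. Applying the boundary condition once more on all of $\partial\Omega$ yields $\tfrac{\partial u}{\partial\overrightarrow{n}}=-\alpha u>0$, as claimed. The only technical point is that Hopf's lemma is being applied to the linearized operator $L$ with variable coefficients $a_{ij}(\nabla u(x))$; this is legitimate because these coefficients are bounded and uniformly elliptic on a neighborhood of $x_0$, which again rests on the boundary $C^{1}$-regularity of $u$.
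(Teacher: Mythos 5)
Your proof is correct and follows essentially the same route as the paper: rule out an interior maximum using the equation at a critical point, apply Hopf's boundary point lemma to the linearized uniformly elliptic operator at the boundary maximum, and then use the Robin condition to force $\max_{\overline{\Omega}}u<0$ and hence $\frac{\partial u}{\partial \overrightarrow{n}}=-\alpha u>0$ on $\partial\Omega$. The only blemish is a harmless exponent slip: the right-hand side of the non-divergence form should be $H(1+|\nabla u|^2)^{1/2}$ rather than $H(1+|\nabla u|^2)$, which does not affect the sign argument.
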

\begin{proof}[Proof] According to the assumption of
\[\left\{ \begin{array}{l}
  \mbox{div}(\frac{\nabla u}{\sqrt{1+|\nabla u|^2}})=\sum\limits_{i,j=1}^{n}
a_{ij}(\nabla u)\frac{\partial^2 u}{\partial x_i \partial x_j}=H >0~~ \mbox{in}~~\Omega, \\
 \frac{\partial u}{\partial \overrightarrow{n}}+\alpha u=0 ~~~\mbox{on}~ \partial\Omega,
 \end{array} \right.\]
where $a_{ij}=\frac{1}{\sqrt{1+|\nabla u|^2}}(\delta_{ij}-\frac{u_{x_i}u_{x_j}}{1+|\nabla u|^2})$. By the strong maximum principle, $u$ obtains its maximum on $\partial\Omega.$ In fact, by the positive definiteness of the matrix $A=(a_{ij})$, if $u$ obtains the maximum at $x_0\in \Omega,$ then $B=(D_{ij}u(x_0))$ is seminegative definite. Hence the matrix $AB$ is seminegative definite with a nonpositive trace, it implies that $\sum_{i,j=1}^{n}a_{ij}(\nabla u)\frac{\partial^2 u}{\partial x_i \partial x_j}\leq 0,$ which is a contradiction. Thus there exists a point $x_0\in \partial\Omega$ such that $u(x_0)=\max_{\overline{\Omega}}u.$ Suppose that $u(x_0)\geq 0.$ By Hopf lemma we have $\frac{\partial u(x_0)}{\partial \overrightarrow{n}}>0.$ This contradicts with the fact that $\frac{\partial u(x_0)}{\partial \overrightarrow{n}}+\alpha u(x_0)=0,$ thus $u<0~\mbox{in}~\overline{\Omega}.$ Therefore $\frac{\partial u}{\partial \overrightarrow{n}}=-\alpha u>0~\mbox{on}~\partial\Omega.$
\end{proof}

\begin{Lemma}\label{le3.2} 
Let $u$ be a solution to (\ref{1.2}), or (\ref{1.3}). Then $u$ has at least one critical point in $\Omega.$
\end{Lemma}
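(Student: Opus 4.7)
The plan is to locate the minimum of $u$ on the compact set $\overline{\Omega}$ and show that it must be attained at an interior point; since $\nabla u$ vanishes at any interior minimum, such a point is a critical point and the lemma follows.

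First I would invoke continuity of $u$ (solutions are smooth) and compactness of $\overline{\Omega}$ to produce a minimizer $p\in\overline{\Omega}$ of $u$, and then argue that $p\in\Omega$. The crucial input in both boundary value problems is that the outward normal derivative of $u$ is \emph{strictly positive} on $\partial\Omega$: in the Neumann problem this is immediate from the hypothesis $c>0$, while in the Robin problem Lemma~\ref{le3.1} gives $u<0$ on $\overline{\Omega}$, so that $\frac{\partial u}{\partial \overrightarrow{n}}=-\alpha u>0$ on $\partial\Omega$.

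To rule out a boundary minimum I would argue directly: if $p\in\partial\Omega$, then by smoothness of $\partial\Omega$ the inward segment $\{p-t\overrightarrow{n}(p):0<t<t_{0}\}$ lies in $\Omega$ for some small $t_{0}>0$, and minimality of $u(p)$ yields $u(p-t\overrightarrow{n}(p))\geq u(p)$ on this segment. Dividing by $t$ and passing to the one-sided limit $t\to 0^{+}$ gives $-\frac{\partial u}{\partial \overrightarrow{n}}(p)\geq 0$, i.e.\ $\frac{\partial u}{\partial \overrightarrow{n}}(p)\leq 0$, contradicting the strict positivity just obtained. Hence $p\in\Omega$, so $\nabla u(p)=0$ and $p$ is the desired critical point.

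The entire argument hinges on positivity of $\frac{\partial u}{\partial \overrightarrow{n}}$ on $\partial\Omega$; the Neumann case is trivial and the Robin case is exactly Lemma~\ref{le3.1}, so there is no real obstacle. In particular, neither Hopf's lemma nor the specific quasilinear structure of the mean curvature operator enters the proof --- only the sign of the normal derivative on $\partial\Omega$ together with a one-sided difference quotient at the hypothetical boundary minimizer.
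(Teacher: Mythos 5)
Your proposal is correct and follows essentially the same route as the paper: locate the minimizer of $u$ on the compact set $\overline{\Omega}$ and exclude a boundary minimum using the strict positivity of $\frac{\partial u}{\partial \overrightarrow{n}}$ on $\partial\Omega$ (trivial in the Neumann case, Lemma~\ref{le3.1} in the Robin case). Your one-sided difference quotient is in fact a cleaner justification of the boundary-exclusion step than the paper's appeal to the Hopf lemma, which is not actually needed once the sign of the normal derivative is known.
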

\begin{proof}[Proof] Since $\frac{\partial u}{\partial \overrightarrow{n}}>0~\mbox{on}~\partial\Omega,$ the Hopf lemma implies that $u$ cannot have minimal points on $\partial\Omega$. On the other hand, since $u$ is an analytic non-constant function, therefore $u$ must obtain its minimum at some $p\in\Omega$ with $\nabla u(p)=0.$ Then $u$ has at least one point $p$ with $\nabla u(p)=0.$
\end{proof}

\begin{Lemma}\label{le3.3} 
Let $u$ be a solution to (\ref{1.2}), or (\ref{1.3}). Then $u$ is a Morse function, i.e., the Gaussian curvature $K(p):=\det(D^2u(p))\neq 0$ for any critical point $p.$
\end{Lemma}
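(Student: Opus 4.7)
The plan is to argue by contradiction using the Chen--Huang comparison developed in Section~2. Suppose there exists an interior critical point $p\in\Omega$ with $\det(D^2u(p))=0$. Since $\nabla u(p)=0$, evaluating (\ref{1.1}) at $p$ gives $\Delta u(p)=H$, so the two eigenvalues of $D^2u(p)$ sum to $H>0$ and have product $0$; they are therefore exactly $H$ and $0$. After a rigid rotation placing $p$ at the origin and aligning the axes with the eigenvectors, I may assume $D^2u(p)=\mathrm{diag}(H,0)$.

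To apply Lemma~\ref{le2.1}, I would introduce a second solution of the same mean curvature equation that matches $u$ to second order at $p$. A natural candidate is the cylindrical CMC graph
\[
v(x_1,x_2)=u(p)+\frac{1}{H}\bigl(1-\sqrt{1-H^{2}x_1^{2}}\,\bigr),\qquad |x_1|<1/H,
\]
which is independent of $x_2$ and, by direct computation, solves (\ref{1.1}). One verifies $v(p)=u(p)$, $\nabla v(p)=0$ and $D^{2}v(p)=\mathrm{diag}(H,0)=D^{2}u(p)$, so $u$ and $v$ have second-order contact at $p$. Lemma~\ref{le2.1} then yields, in suitable linear coordinates $(\xi,\eta)$ centered at $p$,
\[
u-v=\mathrm{Re}\bigl(\rho(\xi+i\eta)^{k}\bigr)+o\bigl((\xi^{2}+\eta^{2})^{k/2}\bigr)
\]
for some integer $k\geq 3$ and some complex $\rho\neq 0$. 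By Remark~\ref{re2.3}, the nodal set of $w:=u-v$ near $p$ consists of at least $2k\geq 6$ smooth arcs meeting at $p$ and partitioning a neighborhood of $p$ into at least six sectors on which $w$ alternates in sign.

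The remaining step is to turn this local six-fold sectoring into a contradiction, and this is where I expect the main difficulty to lie. By Lemma~\ref{le2.2} the interior critical zeros of the linear elliptic equation satisfied by $w$ are isolated, so each of the six nodal arcs of $w$ issuing from $p$ must extend as a smooth curve within the common domain of $u$ and $v$ until it reaches either $\partial\Omega$ or the edge $\{|x_1|=1/H\}$ of the strip where $v$ is defined. Choosing a small enough disk around $p$ contained inside $\Omega$ and inside the strip, and combining the alternating sign pattern with the boundary sign information $\partial u/\partial\vec{n}>0$ on $\partial\Omega$ (immediate in the Neumann case from (\ref{1.2}), and provided by Lemma~\ref{le3.1} in the Robin case), the six-fold sectoring must force either two of the nodal arcs of $w$ to coincide or an additional interior zero of $w$ inside the disk, both of which contradict the structural description of Lemma~\ref{le2.1} together with the isolation statement of Lemma~\ref{le2.2}. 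The delicate point, to be handled carefully, is to make this arc-tracing argument uniform for the Neumann and Robin boundary conditions, paralleling the arc-counting argument used for the Dirichlet problem in \cite{Deng1}.
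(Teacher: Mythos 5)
Your setup is exactly the paper's: the comparison surface $v(x_1,x_2)=u(p)+\frac{1}{H}\bigl(1-\sqrt{1-H^{2}x_1^{2}}\bigr)$ is precisely the solution of the ODE problem in Lemma \ref{le3.4}, the second-order contact at $p$ is the same, and the appeal to Lemma \ref{le2.1} and Remark \ref{re2.3} to obtain at least six alternating sectors of $w:=u-v$ at $p$ is the same. (One small omission: before invoking Lemma \ref{le2.1} you must check $w\not\equiv 0$; the paper does this by noting that $v$ is independent of $x_2$ while $\partial u/\partial\vec{n}>0$ forces $u_{x_2}>0$ somewhere.)

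The genuine gap is the final step. No contradiction can be extracted ``inside a small disk around $p$'': the local model $\mathrm{Re}\bigl(\rho(\xi+i\eta)^{k}\bigr)$ is a perfectly consistent nodal configuration, no two arcs are forced to coincide and no extra interior zero is forced, so the mechanism you propose does not exist. The paper's contradiction is global and rests on the boundary condition together with the \emph{convexity of $\Omega$}, which your argument never uses. Concretely: (a) by the maximum principle each of the at least three components of $I_{+}=\{w>0\}$ and of $I_{-}=\{w<0\}$ must reach the boundary of $\Omega\cap\bigl((-T,T)\times\mathbb{R}\bigr)$; (b) by Hopf's lemma a component of $I_{+}$ cannot meet that boundary exclusively where $\partial w/\partial\vec{n}<0$ (respectively where $\partial w/\partial\vec{n}+\alpha w<0$ in the Robin case), and symmetrically for $I_{-}$; (c) because $\partial u/\partial\vec{n}$ is a positive constant (or $-\alpha u$), $v=X(x_1)$ with $X'\geq 0$ and $X''\geq 0$ on $[0,T)$, and $\partial\Omega$ is convex, the sets $\gamma_{\pm}=\{\pm\,\partial w/\partial\vec{n}>0\}$ have at most two components each. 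The incompatibility of (a)--(b), which requires at least three interlaced boundary pieces of each sign, with the count in (c) is what closes the proof; this is the step your proposal replaces with an unsupported assertion. You also leave unaddressed the case $\Omega\not\subset(-T,T)\times\mathbb{R}$ (the paper truncates at $\widetilde{T}<T$ close to $T$, where $X'(\pm\widetilde{T})$ is large and controls the sign of $\partial w/\partial\vec{n}$ on the vertical edges) and the Robin modification (work with $\partial w/\partial\vec{n}+\alpha w$ and choose $\widetilde{T}$ so that $v=0$ on $x_1=\pm\widetilde{T}$).
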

Morse and semi-Morse function are described in \cite{Bott,Cavicchioli}. In order to prove Lemma \ref{le3.3}, we need the following lemma.

\begin{Lemma}\label{le3.4} 
For constant $H$ is from (\ref{1.1}) and any constant $h$, there exists a number $T$ $(0<T<\infty)$ such that the following initial value problem
\begin{equation}\label{3.1}\begin{array}{l}
\left\{ \begin{array}{l}
X''(t)=H(1+|X'(t)|^2)^{\frac{3}{2}}, ~~-T<t<T,\\
X(0)=h, \\
X'(0)=0,
 \end{array} \right.
\end{array}\end{equation}
has a unique $C^{\infty}$-solution $X(t)$, which satisfies the following
\begin{equation}\label{3.2}
\begin{array}{l}
X(t)=X(-t),~~-T<t<T,
\end{array}
\end{equation}
\begin{equation}\label{3.3}
\begin{array}{l}
X(t)\geq  h,~~-T<t<T,
\end{array}
\end{equation}
\begin{equation}\label{3.4}
\begin{array}{l}
X'(t)\geq 0,~~0\leq t<T.
\end{array}
\end{equation}

\end{Lemma}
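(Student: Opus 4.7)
The plan is to combine standard ODE theory with the autonomous structure of the equation, exploiting in turn (i) Picard--Lindel\"of for existence and uniqueness, (ii) a reflection trick for the symmetry, (iii) strict convexity from $H>0$ for the monotonicity/lower bound, and (iv) a first-order reduction to pin down the maximal interval explicitly.

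First I would note that the right-hand side $F(p) := H(1+p^{2})^{3/2}$ is real-analytic in $p$, so the standard Picard--Lindel\"of theorem gives a unique local $C^{\infty}$ (indeed real-analytic) solution $X$, which extends to a maximal open interval $(-T_{-},T_{+})\ni 0$. To obtain the symmetry (\ref{3.2}), I would use the uniqueness trick: set $Y(t):=X(-t)$; since the equation is autonomous and involves only $|X'|^{2}$, a direct computation shows $Y''(t)=H(1+|Y'(t)|^{2})^{3/2}$ with $Y(0)=h$ and $Y'(0)=-X'(0)=0$, so $Y$ satisfies the same IVP as $X$ on the reflected interval. Uniqueness forces $Y\equiv X$, which in particular implies $T_{-}=T_{+}=:T$ and yields (\ref{3.2}).

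Next, observe that $X''(t)=H(1+|X'(t)|^{2})^{3/2}>0$ identically on $(-T,T)$ since $H>0$. Thus $X$ is strictly convex on $(-T,T)$, so $X(t)\ge X(0)+X'(0)\,t=h$, which is (\ref{3.3}). Also $X'$ is strictly increasing with $X'(0)=0$, so $X'(t)\ge 0$ for $0\le t<T$, which is (\ref{3.4}).

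The main (and only slightly non-trivial) step is showing $T<\infty$ and producing a $C^{\infty}$ solution up to the endpoints of $(-T,T)$. Setting $p(t):=X'(t)$ on $[0,T)$, the ODE reduces to the first-order separable equation $p'=H(1+p^{2})^{3/2}$ with $p(0)=0$. Since
\begin{equation*}
\frac{d}{dp}\left(\frac{p}{\sqrt{1+p^{2}}}\right)=\frac{1}{(1+p^{2})^{3/2}},
\end{equation*}
integrating from $0$ to $t$ gives $p(t)/\sqrt{1+p(t)^{2}}=Ht$. The left-hand side is bounded by $1$, and conversely for any $t\in[0,1/H)$ we can solve explicitly to get $p(t)=Ht/\sqrt{1-H^{2}t^{2}}$, which blows up as $t\to(1/H)^{-}$. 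Hence $T=1/H$, and integrating once more yields the explicit spherical-cap formula $X(t)=h+H^{-1}(1-\sqrt{1-H^{2}t^{2}})$ on $(-1/H,1/H)$, which is manifestly $C^{\infty}$ on this open interval and verifies all the claims. The only place where one might stumble is recognizing that the solution genuinely fails to extend past $t=1/H$; this is the role of the explicit first-order integration, which makes the vertical-tangent blow-up of $X'$ transparent.
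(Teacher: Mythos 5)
Your proof is correct and lands on exactly the same explicit solution the paper uses, namely $X(t)=h+\frac{1}{H}\bigl(1-\sqrt{1-H^{2}t^{2}}\bigr)$ with $T=1/H$; the paper simply writes this formula down and declares the conclusions immediate, whereas you derive it via the separable first-order reduction and justify uniqueness, symmetry, and monotonicity along the way. Same approach, just with the details the paper omits filled in.
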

\begin{proof}[Proof] Since the solution of problem (\ref{3.1}) is $X(t)=h+\frac{1}{H}(1-\sqrt{1-H^2t^2})$ for $|t|<T=\frac{1}{H}.$ So the results naturally hold.
\end{proof}

\begin{proof}[Proof of Lemma \ref{le3.3}] We set up the usual contradiction argument. Suppose that $p\in \Omega$ is a point such that $\nabla u(p)=0$ and the Gaussian curvature $K(p)=0.$ Without loss of generality, by using a suitable parallel translation and a rotation of coordinates, we may suppose that
\begin{equation}\label{3.5}
\begin{array}{l}
p=0~~\mbox{and}~~[D_{ij}u(0)]=\mbox{diag}[H,0].
\end{array}
\end{equation}
By Lemma \ref{le3.4} for $h=u(0),$ we get a unique solution to (\ref{3.1}), denote by $v.$ Let $v(x)~(=v(x_1,x_2))=X(x_1),$ thus $v$ satisfies
\begin{equation}\label{3.6}
\left\{
\begin{array}{l}
\mbox{div}(\frac{\nabla v}{\sqrt{1+|\nabla v|^2}})=H, ~~\mbox{in}~(-T,T)\times \mathbb{R},\\
\left[D_{ij}v(0)\right]= \mbox{diag}[H,0],\\
v(0)=u(0)=h~~\mbox{and}~~\nabla v(0)=\nabla u(0)=0.
\end{array}
\right.
\end{equation}
By (\ref{3.6}), we know that $(u-v)$ vanishes up to second order derivatives at 0. Moreover, we can know that $(u-v)$ is not identically zero. In fact, $v=v(x)=X(x_1).$ On the other hand, Lemma \ref{le3.1} shows that $\frac{\partial u}{\partial \overrightarrow{n}}>0,$ we can suppose that unit outward normal vector $\overrightarrow{n}=(0,1)$, then $\frac{\partial u}{\partial \overrightarrow{n}}=\nabla u\cdot \overrightarrow{n}=u_{x_2}>0.$ So $(u-v)$ is not identically zero. The unique continuation theorem of solutions for elliptic equations shows that $(u-v)$ never vanishes up to infinite order at 0. Using Lemma \ref{le2.1}, we get
\begin{equation}\label{3.7}
\begin{array}{l}
(u-v)(x)=P_k(x)+o(|x|^k)~~\mbox{as}~|x|\rightarrow 0
\end{array}
\end{equation}
for some integer $k\geq 3,$ where $P_k(x)$ is a homogeneous polynomial of degree $k$ and $P_k(x)$ is not identically zero. In addition, Lemma \ref{le2.2} shows that every interior critical point of $(u-v)$ is isolated. Furthermore, Remark \ref{re2.3} shows that the nodal sets of $(u-v)$ consist of $k$ smooth arcs in some neighborhood $U$ of the origin, and that all smooth arcs intersect at $(0,0)$ and divide $U$ into $2k (k\geq 3)$ sectors.

Firstly, we investigate the case of Neumann boundary condition (\ref{1.2}). In order to prove the result, we should divide the proof into two cases, i.e., $T$ is large enough and not large enough respectively. Consider
\begin{equation}\label{3.8}
\begin{array}{l}
I_+=\Big\{x\in \Omega\cap \{(-T,T)\times\mathbb{R}\};u(x)-v(x)>0\Big\},
\end{array}
\end{equation}
and
\begin{equation}\label{3.9}
\begin{array}{l}
I_-=\Big\{x\in \Omega\cap \{(-T,T)\times\mathbb{R}\};u(x)-v(x)<0\Big\}.
\end{array}
\end{equation}
Therefore, it follows from Lemma \ref{le2.1} and Remark \ref{re2.3} that
\begin{equation}\label{3.10}
\begin{array}{l}
\mbox{Both}~I_+ ~\mbox{and}~ I_- ~\mbox{have\ at\ least\ three\ components\ and\ each}\\
\mbox{of\ them\ meets\ the\ boundary} ~\partial( \Omega\cap \{(-T,T)\times\mathbb{R}\}).
\end{array}
\end{equation}

Case 1: If $T$ is large enough, i.e., $\Omega\cap \{(-T,T)\times\mathbb{R}\}=\Omega$. Since $\Omega$ is convex, by Lemma \ref{le3.1} and Lemma \ref{le3.4} we have that $\frac{\partial u}{\partial \overrightarrow{n}}>0$ and $v'(x_1)\geq 0, v''(x_1)\geq 0 ~\mbox{for}~ 0\leq x_1<T$, then we know that $\frac{\partial (u-v)}{\partial \overrightarrow{n}}$ preserves sign in some fixed arc and $\frac{\partial (u-v)}{\partial \overrightarrow{n}}$ changes sign alternatively in two adjacent arcs. The sign distribution for directional derivative of $(u-v)$ on $\partial\Omega$ is shown in Fig. 1.
\begin{center}
  \includegraphics[width=5cm,height=3.7cm]{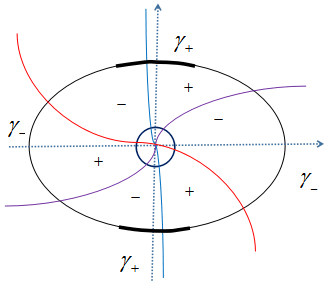}\\
  \scriptsize {\bf Fig. 1.}~~The sign distribution for directional derivative of $(u-v)$ on $\partial\Omega$.
\end{center}
Now we put
\begin{equation}\label{3.11}
\begin{array}{l}
\gamma_+=\Big\{x\in \partial\Omega;\frac{\partial (u-v)}{\partial \overrightarrow{n}}(x)>0\Big\},
\end{array}
\end{equation}
and
\begin{equation}\label{3.12}
\begin{array}{l}
\gamma_{-}=\Big\{x\in \partial\Omega;\frac{\partial (u-v)}{\partial \overrightarrow{n}}(x)<0\Big\}.
\end{array}
\end{equation}
Therefore, it never occurs that a component of $(I_-\cap\Omega)$ meets $\partial\Omega$ exclusively in $\gamma_+.$  Suppose by contradiction that $\gamma_1$ is a component of ($I_-\cap\Omega$) which meets $\partial\Omega$ exclusively in $\gamma_+.$ By Lemma \ref{le2.1}, we see that $(u-v)$ satisfies (\ref{3.7}), then the strong maximum principle implies that a negative minimum of $(u-v)$ in $\overline{\gamma_1}$ is attained at $p\in\gamma_+$ and $\frac{\partial (u-v)}{\partial \overrightarrow{n}}(p)\leq0$. This contradicts with the definition of $\gamma_+.$ By the same way, we know that it never occurs that a component of $(I_+\cap\Omega)$ meets $\partial\Omega$ exclusively in $\gamma_-.$ But these facts contradict with (\ref{3.10})

Case 2: If $T$ is not large enough, i.e., $\Omega\backslash \{\Omega\cap \{(-T,T)\times\mathbb{R}\}\neq \varnothing$. Choose a number $\widetilde{T}$ such that $\widetilde{T}<T,$ which is sufficiently near to $T.$ Set $\widetilde{\Omega}=(-\widetilde{T},\widetilde{T})\times\mathbb{R}.$ We only should replace $\Omega$ by $\Omega\cap\widetilde{\Omega}$ and we can use the same method of case 1.

Secondly, we consider the case of Robin boundary condition (\ref{1.3}). We can use the same method in the situation of Neumann boundary condition. Indeed, we select $\widetilde{T}$ with $\{v(x)=0\}=\{x_1=\pm \widetilde{T}\}$. Thus  $\partial(\Omega\cap\widetilde{\Omega})$ consists of
$$\mbox{at\ most\ two\ components\ of}~ \Big\{\frac{\partial (u-v)}{\partial \overrightarrow{n}}+\alpha (u-v)>0\Big\}$$
and
$$\mbox{at\ most\ two\ components\ of}~ \Big\{\frac{\partial (u-v)}{\partial \overrightarrow{n}}+\alpha (u-v)<0\Big\}.$$
\end{proof}

\begin{Lemma}\label{le3.5} 
Let $u$ be a solution to (\ref{1.2}), or (\ref{1.3}). Then $u$ does not have maximum points in $\Omega.$
\end{Lemma}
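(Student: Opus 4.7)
The plan is a contradiction argument based on the sign of the mean curvature together with the structure of the equation at a critical point, mirroring the interior part of the maximum principle argument already used in the proof of Lemma \ref{le3.1}.

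Suppose, toward a contradiction, that $u$ attains a local maximum at some point $p \in \Omega$. Then $\nabla u(p)=0$ and the Hessian $D^2 u(p)$ is negative semidefinite. Rewriting the mean curvature equation in non-divergence form (as in the proof of Lemma \ref{le3.1}),
\[
\sum_{i,j=1}^{n} a_{ij}(\nabla u)\, u_{x_i x_j} \;=\; H \;>\; 0 \qquad \text{in } \Omega,
\]
with $a_{ij}(\xi)=\frac{1}{\sqrt{1+|\xi|^2}}\bigl(\delta_{ij}-\frac{\xi_i \xi_j}{1+|\xi|^2}\bigr)$. At the critical point $p$, $\nabla u(p)=0$ forces $a_{ij}(\nabla u(p))=\delta_{ij}$, so that the equation at $p$ reduces to $\Delta u(p)=H>0$. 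On the other hand, since $D^2u(p)$ is negative semidefinite, $\Delta u(p)=\operatorname{tr}(D^2u(p))\le 0$. This contradicts $H>0$.

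The above interior argument does not depend on the form of the boundary condition, so it handles both the Neumann problem (\ref{1.2}) and the Robin problem (\ref{1.3}) simultaneously. I do not expect any serious obstacle here: the statement is essentially the interior half of the strong maximum principle already exploited in Lemma \ref{le3.1}, and in the Robin case it is in fact a direct consequence of Lemma \ref{le3.1} (which gives $u<0$ and $u$ attaining its maximum on $\partial\Omega$); the only new content is the observation that the same pointwise trace argument works equally well under the Neumann condition, where the strict sign of $H$ still rules out an interior maximizer.
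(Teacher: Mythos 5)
Your proposal is correct and follows essentially the same argument as the paper: evaluate the non-divergence form of the equation at an interior maximizer, where $\nabla u = 0$ forces the coefficients to reduce to $\delta_{ij}$, so the equation gives $\Delta u = H > 0$ while the negative semidefinite Hessian gives $\Delta u \le 0$. The paper's proof is identical in substance, so no further comment is needed.
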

\begin{proof}[Proof] Since $\mbox {div}(\frac{\nabla u}{\sqrt{1+|\nabla u|^2}})=H>0$, the strong maximum principle implies that $u$ cannot obtain maximum in $\Omega.$  In fact,  let operator $L$ be the mean curvature operator. Suppose that $u$ has maximum points in $\Omega$ and that $x_0$ is a maximum point, then $D_1u(x_0)=D_2u(x_0)=0, D_{11}u(x_0)\leq 0$, $D_{22}u(x_0)\leq 0$ and $Lu(x_0)=D_{11}u(x_0) + D_{22}u(x_0)\leq 0$. However, this is contradict with $Lu>0$ in $\Omega.$ This completes the proof of Lemma \ref{le3.5}.
\end{proof}

Next, we show the sufficient and necessary condition for existence of the saddle points.
\begin{Lemma}\label{le3.6} 
Let $u$ be a solution to (\ref{1.2}), or (\ref{1.3}). Then $u$ has at least two minimal points in $\Omega$, if and only if there exists a point $p$ such that $\nabla u(p)=0$ and $K(p)<0.$
\end{Lemma}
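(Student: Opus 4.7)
The plan is to leverage the structural information already established in this section: every interior critical point of $u$ is non-degenerate (Lemma \ref{le3.3}) and $u$ has no interior maximum (Lemma \ref{le3.5}), so each interior critical point is either a non-degenerate minimum ($K>0$) or a non-degenerate saddle ($K<0$); moreover, because $\tfrac{\partial u}{\partial\overrightarrow n}>0$ on $\partial\Omega$ (Lemmas \ref{le3.1}--\ref{le3.2}), Hopf's lemma forbids boundary minima of $u$. Thus both ``minimal point'' and ``saddle point'' in the statement refer to non-degenerate interior critical points with the respective sign of $K$.

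For the sufficiency direction, fix a saddle $p\in\Omega$. Since $D^{2}u(p)$ has signature $(1,1)$, the Morse lemma produces smooth local coordinates $(\xi,\eta)$ around $p$ with $u-u(p)=\xi^{2}-\eta^{2}$, so the nodal set $\{u=u(p)\}$ near $p$ consists of two transverse smooth arcs cutting a neighborhood of $p$ into four sectors, with two opposite \emph{descending} sectors $S_{1},S_{2}$ where $u<u(p)$ and two opposite \emph{ascending} sectors where $u>u(p)$. Let $C_{i}$ be the connected component of $\Omega\cap\{u<u(p)\}$ containing $S_{i}$. The key claim is $C_{1}\neq C_{2}$. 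If $C_{1}=C_{2}$, pick a simple arc $\gamma_{0}\subset C_{1}$ joining a point $a_{1}\in S_{1}$ close to $p$ to a point $a_{2}\in S_{2}$ close to $p$, and splice on short radial arcs from $p$ through $S_{1},S_{2}$ to form a Jordan curve $\Gamma\subset\Omega$. Convexity of $\Omega$ (hence simple connectedness) gives a bounded subdomain $D\Subset\Omega$ with $\partial D=\Gamma$; near $p$, $\Gamma$ separates the two ascending sectors, so $D$ contains one of them and therefore $\sup_{D}u>u(p)$. But $u$ has no interior maximum (Lemma \ref{le3.5}), so this supremum must be attained on $\Gamma$, where $u\leq u(p)$ --- a contradiction. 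With $C_{1}\cap C_{2}=\emptyset$ in hand, the minimum of $u$ on each $\overline{C_{i}}$ cannot lie on $\partial C_{i}\cap\Omega\subset\{u=u(p)\}$ (since $u<u(p)$ inside $C_{i}$) nor on $\partial C_{i}\cap\partial\Omega$ (by Hopf's lemma as $\tfrac{\partial u}{\partial\overrightarrow n}>0$), so it is achieved at an interior critical point $q_{i}\in C_{i}$, and $q_{1}\neq q_{2}$ by disjointness.

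For the necessity direction, assume $q_{1},q_{2}$ are two distinct interior minima of $u$ and set
\[
t^{*}=\inf\bigl\{t\in\mathbb R\ :\ q_{1}\text{ and }q_{2}\text{ lie in the same connected component of }\{u\leq t\}\bigr\}.
\]
Since $\{u\leq \max_{\overline\Omega}u\}=\overline\Omega$ is connected while $\{u\leq t\}$ does not even contain both $q_{1},q_{2}$ for $t<\max(u(q_{1}),u(q_{2}))$, $t^{*}$ is a well-defined real number in $[\max(u(q_{1}),u(q_{2})),\,\max_{\overline\Omega}u]$. I claim there is an interior saddle $p^{*}$ with $u(p^{*})=t^{*}$. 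If the level set $\{u=t^{*}\}$ contained no interior critical point, then transversality of level sets to $\partial\Omega$ (a consequence of $\tfrac{\partial u}{\partial\overrightarrow n}>0$) together with the implicit function theorem and a flow along $-\nabla u/|\nabla u|^{2}$ in a tubular neighborhood would produce a homeomorphism between $\{u\leq t^{*}-\varepsilon\}$ and $\{u\leq t^{*}+\varepsilon\}$ for small $\varepsilon>0$, preserving the connected components of $q_{1}$ and $q_{2}$ and contradicting the definition of $t^{*}$. Hence some interior critical point $p^{*}$ sits on $\{u=t^{*}\}$; it cannot be a maximum by Lemma \ref{le3.5}, and it cannot be a local minimum because a non-degenerate local minimum at level $t^{*}$ would only adjoin a fresh small-disk component to $\{u\leq t\}$ for $t$ slightly above $t^{*}$, incapable of bridging the two pre-existing components containing $q_{1},q_{2}$. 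Therefore $p^{*}$ is a non-degenerate saddle with $K(p^{*})<0$.

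The main obstacle I expect is rigorously turning the local Morse picture at the saddle into the global statement $C_{1}\neq C_{2}$ in the sufficiency direction; specifically, verifying that the Jordan curve $\Gamma$ built by gluing a path in $C_{1}=C_{2}$ to short arcs at $p$ really bounds a subdomain $D\subset\Omega$ whose boundary captures an ascending sector near $p$, so that the maximum principle applies on $\overline D$. Once $C_{1}\neq C_{2}$ is secured, producing an interior minimum in each $C_{i}$ is straightforward from Hopf's lemma and compactness, and the necessity direction reduces to a standard Morse-theoretic deformation argument given that interior critical points are non-degenerate (Lemma \ref{le3.3}) and level sets meet $\partial\Omega$ transversally because $\tfrac{\partial u}{\partial\overrightarrow n}>0$.
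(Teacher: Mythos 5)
Your proof is correct, and the ``if'' direction is essentially the paper's argument: the paper also takes the four-sector Morse picture at the saddle, invokes Lemma \ref{le3.5} to force every component of the super-level set $\{u>u(p)\}$ to reach $\partial\Omega$ (which is exactly your Jordan-curve step, left implicit there), concludes that $\{u<u(p)\}$ has at least two components, and extracts an interior minimum from each via $\frac{\partial u}{\partial\overrightarrow n}>0$. The ``only if'' direction is where you genuinely diverge. The paper first extends $u$ to all of $\mathbb{R}^{2}$ by the formula $u(x)=u(y)+\mathrm{dist}(x,y)\,\frac{\partial u}{\partial\overrightarrow n}(y)$ so that $\nabla u\neq 0$ outside $\Omega$ and $\partial N_z$ is a single curve for large $z$; assuming all critical points are non-degenerate minima $p_1,\dots,p_k$, it notes that the sub-level boundaries $\partial N_z$ are mutually diffeomorphic for $z>z_0=\max_i u(p_i)$ yet consist of at least two curves for $z$ slightly above $z_0$ (elliptic-paraboloid picture at each minimum), a contradiction. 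You instead stay inside $\overline\Omega$, define the first level $t^{*}$ at which the components of $\{u\le t\}$ containing the two minima merge, and show the merging forces an interior critical point that is neither a maximum (Lemma \ref{le3.5}) nor a minimum (a non-degenerate minimum only spawns a new disk component), hence a saddle. Both arguments rest on the same Morse-theoretic fact that the topology of sub-level sets changes only at critical values; your mountain-pass formulation avoids the extension to $\mathbb{R}^{2}$ entirely, at the price of having to justify that no merging can occur through $\partial\Omega$ --- which does follow from $\frac{\partial u}{\partial\overrightarrow n}>0$ (the inward flow of $-\nabla u$ keeps the deformation inside $\overline\Omega$, and near a regular boundary point $\{u<t^{*}\}\cap\overline\Omega$ is locally connected), but deserves the explicit sentence you only gesture at. Your step ruling out minima at level $t^{*}$ should also be phrased for \emph{all} critical points on that level simultaneously (there could be several), which your non-degeneracy input from Lemma \ref{le3.3} and Lemma \ref{le2.2} (finiteness and isolation) makes routine.
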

\begin{proof}[Proof] (i) Firstly, we prove the section of ``if". Suppose that $p$ is a point such that $\nabla u(p)=0~\mbox{and}~K(p)<0.$ Therefore there exists an open neighborhood $U$ of $p$ in which the nodal sets of $(u-u(p))$ consist of at least two smooth arcs intersecting at $p$ and divides $U$ into at least four sectors.
Next, we consider the following super-level set
\begin{equation*}
\begin{array}{l}
U_+:=\{x\in \Omega|u(x)>u(p)\}.
\end{array}
\end{equation*}

Lemma \ref{le3.5} implies that each component of $U_+$ has to meet the $\partial\Omega.$ Therefore we know that the following sub-level set
\begin{equation*}
\begin{array}{l}
U_-:=\{x\in \Omega|u(x)<u(p)\}
\end{array}
\end{equation*}
has at least two components. By $\frac{\partial u}{\partial \overrightarrow{n}}>0,$ then $u$ has at least two minimal points in $\Omega$.

(ii) Secondly, we prove the section of ``only if". Since $\frac{\partial u}{\partial \overrightarrow{n}}>0~\mbox{on}~\partial\Omega$ and $\Omega$ is convex, so we can extend the solution $u$ to $\mathbb{R}^2$, denoting by
\begin{equation}\label{3.13}\begin{array}{l}
u(x)=u(y)+\mbox{dist}(x,y)\cdot\frac{\partial u}{\partial \overrightarrow{n}}(y) ,
\end{array}\end{equation}
where $y$ is the unique point on $\partial\Omega$ such that $\mbox{dist}(x,y)=\mbox{dist}(x,\Omega).$ Hence we know that $u\in C^1(\mathbb{R}^2)~\mbox{and}~\nabla u\neq 0~\mbox{in}~\mathbb{R}^2\setminus\Omega.$ Next, we consider the sub-level set $N_z=\{x\in \mathbb{R}^2|u(x)<z\}.$ Therefore we have that
\begin{equation}\label{3.14}\begin{array}{l}
\partial N_z~\mbox{has\ only\ one\ curve\ for\ sufficiently\ large}~z.
\end{array}\end{equation}

Now, we suppose by contradiction that $u$ has at least two minimal points and there does not exist point $p$ such that $\nabla u(p)=0~\mbox{and}~K(p)<0.$
By Lemma \ref{le3.3}, Lemma \ref{le3.5} and (\ref{3.13}), we know that each critical point of $u$ is a minimal point and $\nabla u\neq 0~\mbox{in}~\mathbb{R}^2\backslash \Omega$ respectively. On the other hand, Lemma \ref{le2.2} shows that every critical point is isolated and the number of critical points is finite. Then we suppose that there exists a finite sequence of minimal points of $u$, denotes by $\{p_1,p_2,\ldots,p_k\}$ such that
\begin{equation}\label{3.15}\begin{array}{l}
\nabla u(x)\neq 0~\mbox{for\ all}~x\in \mathbb{R}^2\backslash\{p_1,p_2,\ldots,p_k\}.
\end{array}\end{equation}
Let $z_0=\max\{u(p_i)|1\leq i\leq k\}.$ Therefore we know that the boundary $\partial N_z$ of the sub-level set $N_z$ is $C^1$ curve for $z>z_0$ and $\{\partial N_z\}$ is diffeomorphic to each other. According to the assumption, since $K(p_i)>0,$ then the approximate surface is an elliptic paraboloid in a neighborhood of critical point $p_i$ (If $K(p)<0,$ then the approximate surface is a hyperbolic paraboloid in a neighborhood of critical point $p$). The elliptic paraboloid and hyperbolic paraboloid as shown in Fig. 2 and Fig. 3, respectively.
\begin{center}
  \includegraphics[width=6.6cm,height=4.5cm]{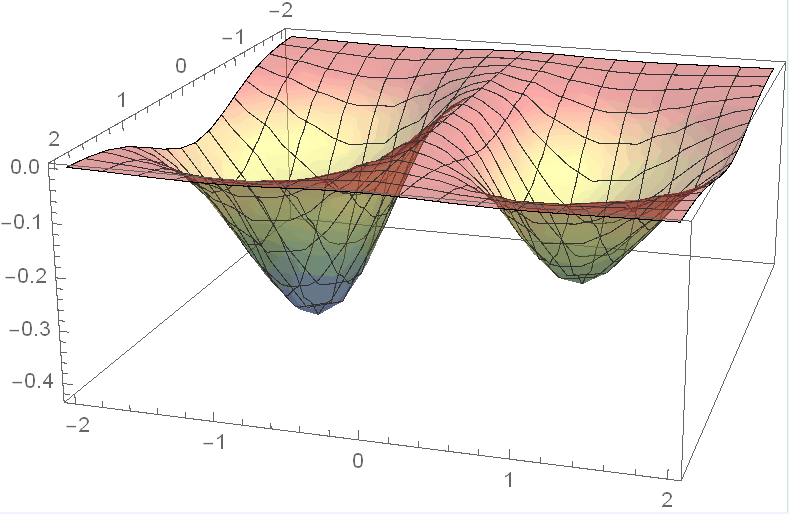}\includegraphics[width=6.6cm,height=4.5cm]{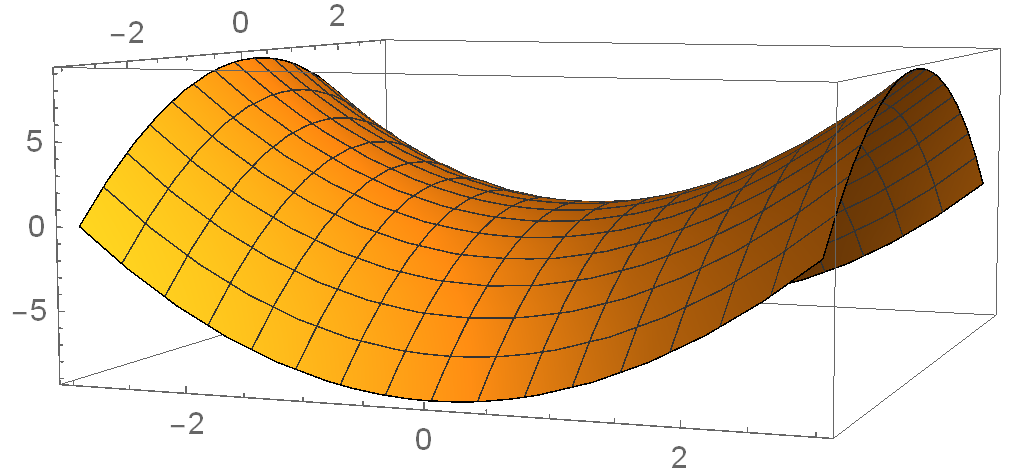}\\
  \scriptsize {\bf Fig. 2.}~~The elliptic paraboloid. ~~~~~~~~~~~~~~~~~~~~~~~~~~~~{\bf Fig. 3.}~~The hyperbolic paraboloid.
\end{center}
If $z$ is near to $z_0,$ then  $\{\partial N_z\}$ has at least two curves. This contradicts with the fact (\ref{3.14}). This completes the proof of Lemma \ref{le3.6} .
\end{proof}

\section{The proof of Theorem \ref{th1.1} and Theorem \ref{th1.2}}
~~~~In this section, firstly, we show the uniqueness of the interior minimal points of $u$ in $\Omega$ by continuity argument. Then, by the sufficient and necessary condition for existence of the saddle points and the non-degeneracy of interior critical points in Section 3, we prove the uniqueness of the critical points.

For $t$ ($t\in [0,1]$), we will prove the uniqueness of the interior minimal points of the solutions $v_t$ to the following problems:
\begin{equation}\label{4.1}\begin{array}{l}
\left\{ \begin{array}{l}
 \mbox{div}(\frac{\nabla v }{\sqrt{1 +t^2|\nabla v|^2}}) = H, ~\mbox{in}~\Omega,\\
 \frac{\partial v}{\partial \overrightarrow n} = c,~\mbox{on}~\partial\Omega,
 \end{array} \right.~~\mbox{or}~~
 \left\{ \begin{array}{l}
 \mbox{div}(\frac{\nabla v}{\sqrt {1+t^2|\nabla v|^2}}) = H,~\mbox{in}~\Omega, \\
 \frac{\partial v}{\partial \overrightarrow n} + \alpha v = 0,~\mbox{on}~\partial\Omega.
 \end{array} \right.
\end{array}\end{equation}
Let $u_t=tv_t$ for $t>0,$ then $u_t$ respectively satisfy
\begin{equation}\label{4.2}\begin{array}{l}
\left\{ \begin{array}{l}
 \mbox{div}(\frac{\nabla u}{\sqrt{1+|\nabla u|^2}}) =tH, ~\mbox{in}~\Omega,\\
 \frac{\partial u}{\partial \overrightarrow n}=c,~\mbox{on}~\partial\Omega,
 \end{array} \right.~~\mbox{or}~~
 \left\{ \begin{array}{l}
 \mbox{div}(\frac{\nabla u}{\sqrt {1+|\nabla u|^2}})=tH,~\mbox{in}~\Omega, \\
 \frac{\partial u}{\partial \overrightarrow n}+\alpha u=0,~\mbox{on}~\partial\Omega.
 \end{array} \right.
\end{array}\end{equation}

By the analysis of Lemma \ref{le3.2}, we know that the solution $v_t$ of (\ref{4.1}) has at least one minimal point in $\Omega.$ According to Lemma \ref{le3.3} and Lemma \ref{le3.6}, we can easily get the following lemmas for $v_t$.
\begin{Lemma}\label{le4.1} 
For any $t\in(0,1].$ Let $v_t$ be a solution to (\ref{4.1}). Then $v_t$ is a Morse function, i.e., the Gaussian curvature $K_t(p):=\det(D^2v_t(p))\neq 0~\mbox{for\ any\ critical\ point}~p.$
\end{Lemma}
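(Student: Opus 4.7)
The plan is to reduce Lemma \ref{le4.1} to the already-established non-degeneracy result, Lemma \ref{le3.3}, via the rescaling $u_t := tv_t$ that is introduced just before the statement. The computation displayed going from (\ref{4.1}) to (\ref{4.2}) shows that if $v_t$ solves (\ref{4.1}) then $u_t$ solves exactly the system (\ref{4.2}), which is nothing but problem (\ref{1.2}) (respectively (\ref{1.3})) with the positive constant $H$ replaced by the positive constant $tH$. Since $t \in (0,1]$ and $H>0$, one has $tH > 0$, so Lemma \ref{le3.3} applies directly to $u_t$, yielding $\det D^2u_t(p)\neq 0$ at every critical point $p$ of $u_t$ in $\Omega$.

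I would then transfer the conclusion from $u_t$ back to $v_t$ by two elementary observations. First, the identity $\nabla u_t = t\nabla v_t$ together with $t>0$ shows that a point $p$ is a critical point of $v_t$ if and only if it is a critical point of $u_t$. Second, $D^2 u_t = tD^2v_t$ gives
$$\det D^2u_t(p) = t^2 \det D^2v_t(p),$$
so the non-vanishing of $K_t(p) = \det D^2 v_t(p)$ at an arbitrary critical point of $v_t$ follows at once from the non-degeneracy already obtained for $u_t$.

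The only point that needs to be checked carefully is that each ingredient feeding into Lemma \ref{le3.3} survives the replacement $H \mapsto tH$. This includes the sign analysis in Lemma \ref{le3.1}, the existence-of-a-critical-point argument in Lemma \ref{le3.2}, the explicit comparison function of Lemma \ref{le3.4} (which depends on the constant only through the life-span $T = 1/(tH)$ and a formula of the form $\frac{1}{tH}(1-\sqrt{1-(tH)^2 s^2})$), and the Chen--Huang technique of Section 2 used within the proof of Lemma \ref{le3.3}. Each of these arguments treats the constant on the right-hand side of (\ref{1.1}) as an arbitrary positive parameter, so the adaptation is entirely notational. I therefore expect no substantive obstacle; the main task is just to verify that the scaling identities and the positivity of $tH$ suffice for the whole machinery of Section 3 to go through unchanged.
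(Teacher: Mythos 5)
Your proposal is correct and takes essentially the same route as the paper, which simply asserts that Lemma \ref{le4.1} follows from Lemma \ref{le3.3} via the rescaling $u_t=tv_t$; you carry out that reduction explicitly, including the transfer of criticality and non-degeneracy through $\nabla u_t=t\nabla v_t$ and $\det D^2u_t=t^2\det D^2v_t$. The only cosmetic point worth noting is that under this rescaling the Neumann datum becomes $tc$ rather than $c$ (the Robin condition is unchanged by homogeneity), but since $tc>0$ for $t\in(0,1]$ the whole machinery of Section 3 applies verbatim, exactly as you observe.
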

\begin{Lemma}\label{le4.2} 
For any $t\in[0,1].$ Then $v_t$ has at least two minimal points, if and only if there exists a point $p$ such that $\nabla v_t(p)=0$ and $K_t(p)<0.$
\end{Lemma}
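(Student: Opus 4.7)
The plan is to reduce this statement to Lemma \ref{le3.6} by exploiting the substitution $u_t = tv_t$ already recorded in (\ref{4.1})--(\ref{4.2}). For $t \in (0,1]$, equation (\ref{4.2}) is precisely the constant-mean-curvature problem (\ref{1.2}) or (\ref{1.3}) with $H$ replaced by $tH > 0$ and with exactly the same Neumann or Robin boundary condition. Consequently, every lemma in Section 3 applies verbatim to $u_t$, with $tH$ playing the role of $H$; in particular Lemma \ref{le3.6} gives the biconditional for $u_t$.

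It remains to translate the statement from $u_t$ to $v_t$. Under the positive scaling $u_t = t v_t$ one has $\nabla u_t = t\,\nabla v_t$ and $D^2 u_t = t\,D^2 v_t$, so the critical sets of $u_t$ and $v_t$ coincide, a common critical point is a local minimum (respectively maximum, saddle) for one iff it has the same type for the other, and
\[
\det\bigl(D^2 u_t(p)\bigr) \;=\; t^{\,2}\,K_t(p),
\]
which has the same sign as $K_t(p)$. Therefore the existence of a critical point with $K_t(p) < 0$ for $v_t$ is equivalent to the existence of a critical point with negative Hessian determinant for $u_t$, and the existence of at least two minimal points of $v_t$ in $\Omega$ is equivalent to the same statement for $u_t$. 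Lemma \ref{le3.6} applied to $u_t$ therefore yields the desired biconditional for $v_t$ whenever $t \in (0,1]$.

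For $t = 0$, equation (\ref{4.1}) degenerates to the Poisson equation $\Delta v_0 = H$ with either Neumann or Robin boundary data, which is exactly the setting of Sakaguchi \cite{Sakaguchi}. His result guarantees that $v_0$ has precisely one critical point, a non-degenerate interior minimum; in particular $v_0$ does not have two minimal points, and no critical point of $v_0$ has negative Gaussian curvature. Hence at $t=0$ both sides of the equivalence fail, and the biconditional holds vacuously.

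The only point requiring any care is the bookkeeping under the scaling: one must confirm that the passage $v_t \leftrightarrow u_t$ preserves the precise Morse-theoretic data appearing in both the hypothesis and the conclusion (number of interior minima, sign of the Hessian determinant at critical points). This is essentially automatic, and once noted the proof is a direct appeal to Lemma \ref{le3.6}, so no analytic ingredient beyond Section 3 is invoked.
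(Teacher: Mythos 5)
Your proof is correct and follows the paper's own route: the paper introduces $u_t = tv_t$ precisely so that (\ref{4.2}) becomes the original problem (\ref{1.2}) or (\ref{1.3}) with $H$ replaced by $tH$ (and $c$ by $tc$), allowing Lemma~\ref{le3.6} to be quoted, and the scaling bookkeeping you record ($\nabla u_t = t\nabla v_t$, $\det D^2u_t = t^2 K_t$) is exactly what makes the transfer legitimate. The only divergence is at $t=0$, where the paper implicitly reruns the Lemma~\ref{le3.6} argument for $\Delta v_0 = H$ (all of its ingredients --- strong maximum principle, Hopf lemma, the Bers expansion --- apply verbatim to the Laplacian), whereas you import Sakaguchi's uniqueness theorem \cite{Sakaguchi} to make the equivalence vacuous; both are valid, though your shortcut is less self-contained and would make Lemma~\ref{le4.3} redundant for the purpose it serves in Lemma~\ref{le4.4}.
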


Next, we will prove that the Gaussian curvature of $v_t$ is positive at any critical point for the case of $t=0.$
\begin{Lemma}\label{le4.3} 
Let $v_t$ be the solution of (\ref{4.1}). Suppose that $\nabla v_0(p)=0$ for some point $p\in \Omega.$ Then the Gaussian curvature $K_0(p)>0$ for any critical point $p$.
\end{Lemma}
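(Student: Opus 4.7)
The plan is to show that at $t=0$, equation (\ref{4.1}) reduces to the Poisson equation $\Delta v_0 = H$ with Neumann or Robin boundary condition, a classical setting where comparison arguments simplify drastically. The proofs of Lemmas \ref{le3.1}, \ref{le3.2} and \ref{le3.5} use only the strong maximum principle and the Hopf lemma, both available for the Laplacian; hence $\partial v_0/\partial\overrightarrow{n}>0$ on $\partial\Omega$, and every critical point of $v_0$ is an interior point that is not a maximum. My strategy is to first establish non-degeneracy $K_0(p)\neq 0$ by replaying Lemma \ref{le3.3}, and then rule out $K_0(p)<0$ by combining the analog of Lemma \ref{le3.6} with Sakaguchi's uniqueness theorem.

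For non-degeneracy, suppose for contradiction $K_0(p)=0$. After translation and rotation we may take $p=0$ and $D^2 v_0(0)=\mathrm{diag}[H,0]$. The role played by Lemma \ref{le3.4} in the proof of Lemma \ref{le3.3} is now taken by the explicit paraboloid $X(x_1):=v_0(0)+\tfrac{H}{2}x_1^2$, which satisfies $\Delta X=H$ on all of $\mathbb{R}^2$ together with $\nabla X(0)=0$ and $D^2 X(0)=\mathrm{diag}[H,0]$. Then $w:=v_0-X$ is globally harmonic on $\Omega$ and vanishes to order $k\geq 3$ at $0$. Lemma \ref{le2.1} and Remark \ref{re2.3} apply (their proofs specialize trivially, since $\Delta$ is already in the canonical form (\ref{2.4})) and produce a nodal configuration of $2k\geq 6$ sign-alternating sectors near $0$. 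Because $X$ is defined on all of $\mathbb{R}^2$, only Case 1 in the proof of Lemma \ref{le3.3} arises: the sets $I_\pm=\{\pm w>0\}$ each have at least three components meeting $\partial\Omega$, while the sign of $\partial w/\partial\overrightarrow{n}$ (respectively of $\partial w/\partial\overrightarrow{n}+\alpha w$) is forced to alternate on consecutive boundary arcs, contradicting the strong maximum principle and Hopf lemma applied componentwise, exactly as in Lemma \ref{le3.3}.

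To upgrade $K_0(p)\neq 0$ to $K_0(p)>0$, I would invoke Sakaguchi's theorem \cite{Sakaguchi}, which asserts that the Poisson equation with Neumann or Robin boundary condition in a bounded smooth convex domain admits exactly one critical point. The proof of Lemma \ref{le3.6} uses only $\partial v_0/\partial\overrightarrow{n}>0$, convexity of $\Omega$, the sign structure of nodal sets, and the non-existence of interior maxima, all of which remain valid for $v_0$; thus its conclusion transfers. Hence a critical point with $K_0(p)<0$ would force $v_0$ to have at least two interior minima, contradicting Sakaguchi's uniqueness. Combined with the previously established $K_0(p)\neq 0$, this forces $K_0(p)>0$.

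The main technical concern is checking that the Chen--Huang comparison scheme of Lemma \ref{le3.3} really goes through in the linear regime, but in fact it becomes considerably easier: $w=v_0-X$ is literally harmonic, so the Bers-type power-series expansion of Lemma \ref{le2.1} collapses to the elementary decomposition of harmonic functions into homogeneous harmonic polynomials. The essential external input is Sakaguchi's uniqueness result, which is what ultimately pins down the sign of the determinant.
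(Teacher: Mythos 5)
Your proposal is correct, but it takes a genuinely different route from the paper at the decisive step. The paper does not split the argument into ``non-degeneracy'' plus ``no saddle'': it assumes $K_0(p)\leq 0$ and runs the Chen--Huang comparison of Lemma \ref{le3.3} once, against the full second-order Taylor polynomial $q(x_1,x_2)=v_0(0,0)+\tfrac12\lambda_1x_1^2+\tfrac12\lambda_2x_2^2$ with $\lambda_1+\lambda_2=H$, $\lambda_1>0$, $\lambda_2\leq 0$. Since $\Delta q=H$, the difference $v_0-q$ is harmonic and vanishes to second order at $p$, so $\widehat I_\pm$ each have at least three boundary-reaching components, while the convexity of $\Omega$ together with $\nabla q=(\lambda_1x_1,\lambda_2x_2)$, $\lambda_1>0\geq\lambda_2$, limits $\widehat\gamma_\pm$ to at most two components each; this single contradiction eliminates the degenerate case and the saddle case simultaneously. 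Your step 1 is exactly the $\lambda_2=0$ instance of this (your $X$ is the paper's $q$ with $\lambda_2=0$), so it goes through; but for $\lambda_2<0$ you abandon the comparison scheme and instead import Sakaguchi's uniqueness theorem \cite{Sakaguchi} via the ``if'' direction of Lemma \ref{le3.6}. That is logically sound --- Sakaguchi's result is an independent prior theorem covering $\Delta v_0=H$ with these Neumann/Robin data in convex domains, and the ``if'' direction of Lemma \ref{le3.6} transfers to $v_0$ as you say --- but it costs self-containedness: the $t=0$ base case of the continuity argument is essentially Sakaguchi's theorem itself, and the paper's point is to reprove it internally with the same machinery used for $t>0$ (indeed, if one is willing to quote Sakaguchi in full, Lemma \ref{le4.3} is immediate and your step 1 becomes redundant). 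The paper's choice of $q$ buys a uniform, two-line extension of your step 1 that makes the external citation unnecessary; you may want to note that your own argument already contains all the ingredients for it.
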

\begin{proof}[Proof]
We set up the usual contradiction argument. Suppose that there exists interior critical point $p$ such that $K_0(p)\leq0$. Without loss of generality, by using a suitable parallel translation and a rotation of coordinates, we may suppose that
\[p=(0,0), [D_{ij}v_0(p)]=diag[\lambda_1,\lambda_2],~\mbox{where}~\lambda_1+\lambda_2=H>0,\lambda_1>0,\lambda_2\leq0.\]
By Lemma \ref{le2.1}, then the difference $v_0-q$ around $(x_1,x_2)=(0,0)$ is given by
 $$v_0(x_1,x_2)-q(x_1,x_2)=P_k(x_1,x_2)+o((x_1^2+x_2^2)^{\frac{k}{2}}),$$
 where $P_k(x_1,x_2)$ is a homogeneous harmonic polynomial of degree $k$ in $\Omega$, and $$q(x_1,x_2)=v_0(0,0)+\frac{1}{2}\lambda_1x_1^2+\frac{1}{2}\lambda_2x_2^2.$$

Firstly, we study the case of Neumann boundary condition in (\ref{4.1}). Next, we consider
 \begin{equation*}
\begin{array}{l}
\widehat{I}_+=\big\{x\in \Omega;v_0(x)-q(x)>0\big\},
\end{array}
\end{equation*}
and
\begin{equation*}
\begin{array}{l}
\widehat{I}_-=\big\{x\in \Omega;v_0(x)-q(x)<0\big\}.
\end{array}
\end{equation*}
Since $v_0(x_1,x_2)-q(x_1,x_2)$ vanishes up to second order derivatives at $(0,0)$ and $P_k(x_1,x_2)$ is real analytic. Then it follows from Lemma \ref{le2.1} and Remark \ref{re2.3} that $k\geq 3$ and
\begin{equation}\label{4.3}
\begin{array}{l}
\mbox{Both}~\widehat{I}_+ ~\mbox{and}~ \widehat{I}_- ~\mbox{have\ at\ least\ three\ components}\\
\mbox{ and\ each\ of\ them\ meets\ the\ boundary} ~\partial\Omega.
\end{array}
\end{equation}
Now we set
\begin{equation}\label{4.4}
\begin{array}{l}
\widehat{\gamma}_+=\Big\{x\in \partial\Omega;\frac{\partial (v_0-q)}{\partial \overrightarrow{n}}(x)>0\Big\},
\end{array}
\end{equation}
and
\begin{equation}\label{4.5}
\begin{array}{l}
\widehat{\gamma}_{-}=\Big\{x\in \partial\Omega;\frac{\partial (v_0-q)}{\partial \overrightarrow{n}}(x)<0\Big\}.
\end{array}
\end{equation}
Since $\Omega$ is convex and $q(x_1,x_2)=v_0(0,0)+\frac{1}{2}\lambda_1x_1^2+\frac{1}{2}\lambda_2x_2^2$ with $\lambda_1+\lambda_2=H>0,\lambda_1>0,\lambda_2\leq0,$ then we know that $\widehat{\gamma}_+$ and $\widehat{\gamma}_{-}$ has at most two components on $\partial\Omega.$ The rest of the proof is same as the proof of Lemma \ref{le3.3}. This contradicts with (\ref{4.3}).

Secondly, we consider the case of Robin boundary condition in (\ref{4.1}). We can use the same method in the case of Neumann boundary condition. This completes the proof.
\end{proof}

Next we will show the uniqueness of the interior minimal points of $u$ in $\Omega$ by using the continuity argument.

\begin{Lemma}\label{le4.4} 
For any $t\in[0,1].$ Then $v_t$ has exactly one minimal point in $\Omega.$
\end{Lemma}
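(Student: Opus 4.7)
\textbf{Proof proposal for Lemma \ref{le4.4}.} The plan is a standard continuity/connectedness argument in the parameter $t$: I will show that the set
\[
A \;=\; \big\{t\in[0,1]\,:\, v_t \text{ has exactly one minimal point in }\Omega\big\}
\]
is non-empty, open, and closed in $[0,1]$, hence equal to $[0,1]$. The base case $t=0$ is immediate: by Lemma \ref{le4.3}, every critical point $p$ of $v_0$ satisfies $K_0(p)>0$; in particular there is no critical point with $K_0(p)<0$, so Lemma \ref{le4.2} (applied at $t=0$) forces $v_0$ to have exactly one minimum, i.e. $0\in A$. Note that Lemmas \ref{le4.1} and \ref{le4.3} together say that $v_t$ is a Morse function for every $t\in[0,1]$, so every critical point is non-degenerate with $K_t\neq 0$, and by Lemma \ref{le4.2} the condition $t\in A$ is equivalent to ``no critical point of $v_t$ has $K_t<0$''.

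For openness, let $t_0\in A$ and let $p_0$ be the unique minimum of $v_{t_0}$. Standard elliptic regularity gives that $t\mapsto v_t$ is continuous in $C^2_{\mathrm{loc}}(\Omega)$, and the Hopf-lemma argument used in Lemma \ref{le3.2} (together with $\partial v_t/\partial\vec n\geq 0$ on $\partial\Omega$, which persists along the family) keeps all critical points in a fixed compact subset of $\Omega$. Non-degeneracy of $p_0$ plus the implicit function theorem applied to $\nabla v_t=0$ produces a unique non-degenerate minimum $p_t$ near $p_0$ for $t$ close to $t_0$. If some other critical point $q_t$ existed for a sequence $t_n\to t_0$, compactness would extract a limit $q_\ast\in\overline\Omega$ with $\nabla v_{t_0}(q_\ast)=0$, forcing $q_\ast=p_0$; but then the IFT uniqueness near $p_0$ contradicts $q_{t_n}\neq p_{t_n}$. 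Hence $t\in A$ in a neighbourhood of $t_0$.

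For closedness, take $t_n\to t_\ast$ with $t_n\in A$ and suppose, for contradiction, that $t_\ast\notin A$. By Lemma \ref{le4.2} there is then a critical point $q$ of $v_{t_\ast}$ with $K_{t_\ast}(q)<0$. Since $v_{t_\ast}$ is Morse, $q$ is non-degenerate, so the implicit function theorem applied to $\nabla v_t=0$ near $(t_\ast,q)$ yields a non-degenerate critical point $q_t$ of $v_t$ for all $t$ in a neighbourhood of $t_\ast$, and by continuity of the Hessian $K_t(q_t)<0$ for $t$ close to $t_\ast$. Invoking Lemma \ref{le4.2} again, $v_t$ has at least two minima, i.e. $t\notin A$, which contradicts $t_n\in A$ for $n$ large. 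Therefore $A$ is closed, and connectedness of $[0,1]$ gives $A=[0,1]$.

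The step I expect to be the main obstacle is openness, specifically ruling out the sudden appearance of an extra interior critical point as $t$ varies. This requires two inputs beyond the IFT: (i) uniform confinement of critical points to a compact subset of $\Omega$ (so that a putative second critical point cannot ``escape to the boundary'' and then disappear), which follows from the Hopf-lemma bound on $\partial v_t/\partial\vec n$ used in Lemma \ref{le3.2}; and (ii) the Morse property of $v_t$ for \emph{all} $t\in[0,1]$, which is exactly what Lemmas \ref{le4.1} and \ref{le4.3} provide. Given these, the argument is routine, and the uniqueness of the minimum follows uniformly in $t$, completing the proof.
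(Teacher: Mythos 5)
Your proposal is correct and follows the same skeleton as the paper: a connectedness argument in $t$ resting on Lemmas \ref{le4.1}, \ref{le4.2} and \ref{le4.3}, with the base case $t=0$ handled exactly as in the paper. The paper phrases it complementarily --- it sets $M_2=\{t: v_t \text{ has at least two minima}\}$ and shows $M_2$ is open and closed, which is logically identical to your showing $A=M_1$ closed and open --- so the decomposition is the same; the genuine difference lies in how the two topological steps are executed. Where you prove closedness of $A$ (equivalently, openness of $M_2$) by continuing the non-degenerate saddle $q$ with $K_{t_\ast}(q)<0$ via the implicit function theorem and invoking continuity of the Hessian, the paper instead runs a sequence argument along line segments $\gamma(p_{k_j},q_{k_j})$, producing points $z_{k_j}$ with the estimate $|K_{t_{k_j}}(z_{k_j})|=|\nabla v_{t_{k_j}}(q_{k_j})|/|p_{k_j}-q_{k_j}|\rightarrow 0$ and deriving a degenerate critical point in the limit; your IFT continuation is cleaner and avoids that mean-value-type identity, which is the least transparent step of the paper's proof. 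Conversely, for openness of $A$ (closedness of $M_2$) the paper passes a sequence of saddle points to the limit and uses the Morse property to rule out $K=0$, while you use IFT uniqueness at $p_0$ plus compactness; both require, and you correctly make explicit, the confinement of critical points to a fixed compact subset of $\Omega$ coming from $\partial v_t/\partial\overrightarrow{n}>0$ on $\partial\Omega$. Both arguments also share the same unproved background hypothesis, namely continuity of $t\mapsto v_t$ in $C^2$, which the paper likewise invokes only as ``by continuity''; if you wanted to strengthen the write-up beyond the paper's level of rigor, that is the one ingredient you would still need to justify.
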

\begin{proof}[Proof] We set $M=[0,1]$ and divide $M$ into two sets $M_1$ and $M_2$ as follows:
\begin{equation}\label{4.6}
\begin{array}{l}
M_1=\{t\in M; v_t~\mbox{has\ only\ one\ minimal\ point\ in}~\Omega\},
\end{array}
\end{equation}
and
\begin{equation}\label{4.7}
\begin{array}{l}
M_2=\{t\in M; v_t~\mbox{has\ more\ than\ two\ minimal\ points\ in}~\Omega\}.
\end{array}
\end{equation}
Then $M=M_1+M_2$ and $M_1\cap M_2=\varnothing.$ Lemma \ref{le4.2} and Lemma \ref{le4.3} imply that $0\in M_1$, i.e., $M_1\neq \varnothing.$

Now we show that $M_2$ is open in $M.$ That is, for any $t_\star\in M_2,$ there exists a constant $\varepsilon>0$ with $(t_\star-\varepsilon,t_\star+\varepsilon)\subset M_2.$ In fact, the follows from Lemma \ref{le4.1} and Lemma \ref{le4.2} and inverse function theorem that $v_t$ has as many critical points as $v_{t_\star}$ when $t$ is near $t_\star.$ Suppose by contradiction that there exists a sequence $\{t_k\}\in M_1$ such that $\{v_{t_k}\}$ has only one minimal point and $t_k\in (t_\star-\frac{1}{k},t_\star+\frac{1}{k})$ for some positive $t_\star\in M_2$. Then it follows from Lemma \ref{le4.1} and Lemma \ref{le4.2} that $v_{t_k}$ does not has the saddle points, i.e., $v_{t_k}$ has exactly one critical point. By Lemma \ref{le4.1} and continuity, we may take a subsequence $\{v_{t_{k_j}}\}$ of $\{v_{t_k}\}$ such that
\begin{equation}\label{4.8}
\begin{array}{l}
p_{k_j}\rightarrow p,~~\nabla v_{t_{k_j}}(p_{k_j})=0,~~K_{t_{k_j}}(p_{k_j})>0,~~\nabla v_{t_\star}(p)=0,~~K_{t_\star}(p)>0.
\end{array}
\end{equation}
Since $t_\star\in M_2,$ then there exists another point $q\in U(p)\subset \Omega$ and a sequence of point $\{q_{k_j}\}$ such that
\[q_{k_j}\rightarrow q,~~\nabla v_{t_{k_j}}(q_{k_j})\rightarrow \nabla v_{t_\star}(q)=0.\]
According to the uniqueness of the critical point of $v_{t_k},$ we can take a subsequence $\{v_{t_{k_j}}\}$ of $\{v_{t_k}\}$ such that $v_{t_{k_j}}$ are all monotone in the line $\gamma(p_{k_j},q_{k_j})$. Therefore there exists a sequence of points $\{z_{k_j};z_{k_j}\in \gamma(p_{k_j},q_{k_j})\}$ which satisfy
\begin{equation}\label{4.9}
\begin{array}{l}
|\nabla v_{t_{k_j}}(z_{k_j})|\leq |\nabla v_{t_{k_j}}(q_{k_j})|\rightarrow 0,~~|K_{t_{k_j}}(z_{k_j})|=\frac{|\nabla v_{t_{k_j}}(q_{k_j})|}{|p_{k_j}-q_{k_j}|}\rightarrow 0.
\end{array}
\end{equation}
By (\ref{4.9}) and continuity, then there should be a point $z\in \gamma (p,q)$ such that
\[\nabla v_{t_\star}(z)=0,~~K_{t_\star}(z)=0,\]
this contradicts with Lemma \ref{le4.1}, then we complete the proof which $M_2$ is open set in $M.$

On the other hand, we show that $M_2$ is closed in $M.$ In fact, let $\{t_i\}$ be a sequence of points in $M_2$ such that $t_i\rightarrow t_0~\mbox{as}~i\rightarrow \infty.$ Then Lemma \ref{le4.2} and the continuity argument imply that there exists a subsequence $\{t_j\}$ of $\{t_i\}$, a sequence $\{p_j\}$ and a point $p\in\overline{\Omega}$ such that
\begin{equation}\label{4.10}
\begin{array}{l}
p_j\rightarrow p~\mbox{as}~j\rightarrow\infty,~~\nabla v_{t_j}(p_j)=0,~\mbox{and}~K_{t_j}(p_j)<0.
\end{array}
\end{equation}
By (\ref{4.10}) and continuity, we have
\begin{equation}\label{4.11}
\begin{array}{l}
\nabla v_{t_0}(p)=0,~\mbox{and}~K_{t_0}(p)\leq 0.
\end{array}
\end{equation}
Since $\nabla v_{t_0}\neq 0~\mbox{on}~\partial\Omega,$ then we have $p\in \Omega.$ Hence it follows from Lemma \ref{le4.1}, Lemma \ref{le4.2}, Lemma \ref{le4.3} and  (\ref{4.11}) that $t_0\in M_2.$ This shows that $M_2$ is closed in $M.$ Then $M_2$ must be $M$ or $\varnothing.$ Since $M_1\neq\varnothing,$ so $M_2=\varnothing~\mbox{and}~M_1=M.$ This completes the proof.
\end{proof}

\begin{proof}[Proof of Theorem \ref{th1.1} and Theorem \ref{th1.2}] By Lemma \ref{le3.3} and Lemma \ref{le3.5}, we know that the Gaussian curvature $K(p)\neq0$ for any critical point $p$ and solution $u$ does not have maximum points in $\Omega$. In addition, Lemma \ref{le3.6} shows that
 \begin{equation}\label{4.12}\begin{array}{l}
\mbox{if}~\exists~ p\in \Omega~ \mbox{such\ that} ~\nabla u(p)=0~ \mbox{and}~ K(p)<0\Leftrightarrow~\sharp\{\mbox{minimal\ points\ of}~u\}\geq 2.
\end{array}\end{equation}

 On the other hand, by Lemma \ref{le4.4}, we know that $u$ has exactly one minimal point in $\Omega.$ Therefore, $u$ does not have saddle points in $\Omega,$ this implies that $u$ has exactly one critical point $p$ in $\Omega$ and $p$ is a non-degenerate interior minimal point of $u$.
\end{proof}

\section{The proof of Theorem \ref{th1.3}} 

~~~~~In this section, we investigate the geometric structure of critical point set $K$ of solutions to prescribed constant mean curvature equation with Neumann and Robin boundary conditions respectively in higher dimensional spaces.

\begin{proof}[Proof Theorem \ref{th1.3}] We divide the proof into three steps.

 Step 1, we turn the mean curvature equation (\ref{1.1}) for $n$-dimension into the similar mean curvature equation for 2-dimension. Without loss generality, let $\Omega$ be a domain of revolution formed by taking a strictly convex planar domain in the $x_1,x_n$ plane with respect to the $x_n$ axis. In the sequel, $x=(x',x_n), x'=(x_1,\cdots,x_{n-1})$ and $r=\sqrt{x_1^2+\cdots+x_{n-1}^2}.$

By the assumptions, we have that the solution $u$ satisfies
\begin{equation}\label{5.1}\begin{array}{l}
u(x',x_n)=u(|x'|,x_n)\triangleq v(r,x_n)
\end{array}\end{equation}
and
\begin{equation}\label{5.2}\begin{array}{l}
\frac{\partial v}{\partial r}(r,x_n)>0$ $ $ for $ $ $ r\neq0.
\end{array}\end{equation}

From (\ref{5.2}), we can know that the critical points of $u$ lie on $x_n$ axis. Next, according to (\ref{5.1}), we have that
\begin{equation}\label{5.3}\begin{array}{l}
u_{x_n}(x',x_n)=v_{x_n}(r,x_n).
\end{array}\end{equation}
Moreover, we can deduce that $u_{x_n}$ satisfies the following equation
\[\sum\limits_{i,j=1}^{n}
a_{ij}(\nabla u)\frac{\partial^2 u_{x_n}}{\partial x_i \partial x_j}+\sum\limits_{i,j=1}^{n}
\frac{\partial a_{ij}(\nabla u) }{\partial x_n }\frac{\partial^2 u}{\partial x_i \partial x_j}=0~~~n\geq 3.\]
That is
\begin{equation}\label{5.4}\begin{array}{l}
\mathscr{L}u_{x_n}:=\sum\limits_{i,j=1}^{n}
a_{ij}(\nabla u)\frac{\partial^2 u_{x_n}}{\partial x_i \partial x_j}+\sum\limits_{i,j=1}^{n}\frac{\partial^2 u}{\partial x_i \partial x_j}
\frac{\partial a_{ij}(\nabla u)}{\partial x_n}=0,
\end{array}\end{equation}
where $a_{ij}(\nabla u)=\frac{1}{\sqrt{1+|\nabla u|^2}}(\delta_{ij}-\frac{u_{x_i}u_{x_j}}{1+|\nabla u|^2}),$ $\frac{\partial a_{ij}(\nabla u)}{\partial x_n }=\frac{1}{(1+|\nabla u|^2)^{3/2}}\big[(\frac{3u_{x_i}u_{x_j}}{1+|\nabla u|^2}-\delta_{ij})(\nabla u\cdot\nabla u_{x_n})-(u_{x_i}u_{x_nx_j}+u_{x_j}u_{x_nx_i})\big]$ is the first derivative term of $u_{x_n}$.

By the assumptions, the strict convexity of $\Omega$ and the Hopf lemma, we can know that $u_{x_n}$ vanishes precisely on the $(n-2)$ dimensional sphere given by
\[S=\{x_n=a\}\cap\partial\Omega,\]
for some $a\in \mathbb{R}.$ For convenience, we define the nodal set
\[\mathscr{N}=\{x\in \Omega|u_{x_n}(x)=0\}.\]
It is clear that all critical points of solution $u$ are contained in $\mathscr{N}$. Also from (\ref{5.3}), $\mathscr{N}$ is rotationally invariant about the $x_n$ axis.

Now we turn the mean curvature equation (\ref{1.1}) for $n$-dimension
\begin{equation*}
\begin{array}{l}
div(\frac{\nabla u}{\sqrt{1+|\nabla u|^2}})=H
\end{array}\end{equation*}
into the following similar mean curvature equation on 2-dimension
\[div(\frac{\nabla v}{\sqrt{1+|\nabla v|^2}})+\frac{1}{\sqrt{1+|\nabla v|^2}}\frac{n-2}{r}v_r=H,\]
that is
\begin{equation}\label{5.5}\begin{array}{l}
\sum\limits_{i,j=1}^{2}a_{ij}(\nabla v)v_{ij}+\frac{1}{\sqrt{1+|\nabla v|^2}}\frac{n-2}{r}v_r=H,
\end{array}\end{equation}
where $\nabla v=(\frac{\partial v}{\partial r},\frac{\partial v}{\partial x_n}),$  $a_{ij}=\frac{1}{\sqrt{1+|\nabla v|^2}}(\delta_{ij}-\frac{v_iv_j}{1+|\nabla v|^2})$ and $v_1=\frac{\partial v}{\partial r}, v_2=\frac{\partial v}{\partial x_n}.$

For any $\theta=(\theta_1,\theta_2)=(\cos \alpha,\sin \alpha)\in S^1,$ where $\alpha \in [0,\pi).$ We turn the quasilinear elliptic equation associated to $v$ into a linear elliptic equation associated to $w=v_{\theta}=\nabla v\cdot \theta.$ Firstly, we differentiate the equation (\ref{5.5}), then take inner product with $\theta.$ For convenience, we set $y=(y_1,y_2)=(x_n,r),$ hence we can get the following equation
\begin{equation}\label{5.6}\begin{array}{l}
L_{v}w+h_1(y)\frac{\partial w}{\partial y_1}+h_2(y)\frac{\partial w}{\partial y_2}+\frac{1}{(1+|\nabla v|^2)^{\frac{3}{2}}}\frac{n-2}{r}\Big[(1+v_{y_1}^2)\frac{\partial w}{\partial y_2}-v_{y_1}v_{y_2}\frac{\partial w}{\partial y_1}\Big] =\frac{1}{\sqrt{1+|\nabla v|^2}} \frac{n-2}{r^2}v_r \theta_2,
\end{array}\end{equation}
where
\[L_{v}w:=\sum\limits_{i,j=1}^{2}
a_{ij}(\nabla v)\frac{\partial^2 w}{\partial y_i \partial y_j}\]
and
\[h_{k}(y)=\sum\limits_{i, j=1}^{2} v_{y_i y_j}\frac{\partial a_{ij}}{\partial v_{y_k}},~~~k=1,2.\]
By (\ref{5.2}) and (\ref{5.6}), we deduce that
\begin{equation}\label{5.7}\begin{array}{l}
L_{v}w+h_1(y)\frac{\partial w}{\partial y_1}+h_2(y)\frac{\partial w}{\partial y_2}+\frac{1}{(1+|\nabla v|^2)^{\frac{3}{2}}}\frac{n-2}{r}\Big[(1+v_{y_1}^2)\frac{\partial w}{\partial y_2}-v_{y_1}v_{y_2}\frac{\partial w}{\partial y_1}\Big]\geq 0.
\end{array}\end{equation}
By (\ref{5.7}), so we can consider the result of projecting a graph onto  $x_1, x_n$ plane (see Fig. 4).
\begin{center}
  \includegraphics[width=8cm,height=3.5cm]{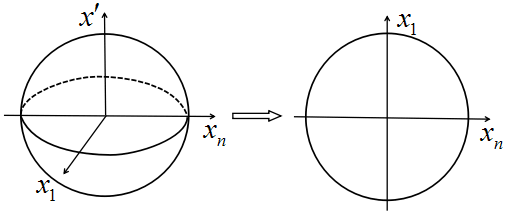}\\
  \scriptsize {\bf Fig. 4.}~~The graphic projection of higher dimensional space onto two dimensional plane.
\end{center}

 Step 2, we show the uniqueness of critical points. This subsection is based on the results of Gidas, Ni and Nirenberg \cite{Gidas} and Caffarelli and Friedman \cite{Caffarelli}, the ideas of Payne \cite{Payne} and Sperb \cite{Sperb}. To prove that whenever critical set has exactly one point, since all critical points of $u$ are contained in $\mathscr{N}\cap\{x_2=\cdots=x_{n-1}=0\}$ and lie on the $x_n$ axis. The nodal set $\mathscr{N}=\{x\in \Omega | u_{x_n}(x)=0\}$ is rotationally invariant about the $x_n$ axis, formed by a set $N_2$ contained in the $x_1,x_n$ 2-dimensional plane rotation about the $x_n$ axis, by (\ref{5.5}), where $N_2$ can be seen as the projection of $\mathscr{N}$ in the $x_1,x_n$ 2-dimensional plane and $\mathscr{N}$ cannot enclose any subdomain of $\Omega$ (By Lemma 2.1 in \cite{Deng1}, $N_2$ cannot enclose any planar subdomain of $\Omega\cap\{x_2=\cdots=x_{n-1}=0\},$ where locally $N_2$ looks like the nodal set of some homogeneous polynomial in $x_1,x_n.$). Because $N_2$ is symmetric with respect to the $x_n$ axis and intersects the $x_n$ axis at exactly one point, hence we prove the uniqueness of critical points.

Step 3, we show the non-degeneracy of critical point. How to show that critical point $p$ is non-degenerate, we restatement that $u$ is rotationally symmetric with respect to $x_n$ axis and critical point $p$ lies on this axis. From (\ref{5.1}) and (\ref{5.2}), we have that $\{u_{x_k}=0\}=\{x_k=0\}\cap \Omega$ for all $1\leq k\leq n-1.$ Hence $u_{x_ix_j}(p)=0$ for any index $1\leq i<j\leq n,$ that is, $D^2u(p)$ is diagonal. By (\ref{5.2}), we can know that $u_{x_k}>0$ in domain $\mathscr{D}_k=\{x_k>0\}\cap \Omega$ for $1\leq k\leq n-1.$ Furthermore, in domain $\mathscr{D}_k$, $u_{x_k}$ satisfies

\begin{equation}\label{5.8}\begin{array}{l}
\mathscr{L}u_{x_k}=\sum\limits_{i,j=1}^{n}
a_{ij}(\nabla u)\frac{\partial^2 u_{x_k}}{\partial x_i \partial x_j}+\sum\limits_{i,j=1}^{n}\frac{\partial^2 u}{\partial x_i \partial x_j}
\frac{\partial a_{ij}(\nabla u)}{\partial x_k}=0,
\end{array}\end{equation}
where $\frac{\partial a_{ij}(\nabla u)}{\partial x_k }=\frac{1}{(1+|\nabla u|^2)^{3/2}}\big[(\frac{3u_{x_i}u_{x_j}}{1+|\nabla u|^2}-\delta_{ij})(\nabla u\cdot\nabla u_{x_k})-(u_{x_i}u_{x_kx_j}+u_{x_j}u_{x_kx_i})\big]$ is the first derivative term of $u_{x_k}$. According to the Hopf lemma, we deduce that $u_{x_kx_k}(p)>0$ for all $1\leq k\leq n-1,$ where critical point $p\in \partial \mathscr{D}_k.$

Finally, we recall that the function $u_{x_n}$ satisfies (\ref{5.4}). By the definition of $\mathscr{N}$, $u_{x_n}>0$ to one side of $\mathscr{N}.$ By applying the Hopf lemma to $u_{x_n}$ at $p\in \mathscr{N},$ we have that $u_{x_nx_n}(p)>0.$ So we prove that the Hessian matrix $D^2u(x)$ of $u$ is diagonal and positive definite at critical point $p$, hence $p$ is the unique critical point and $p$ is a non-degenerate interior minimal point of $u$. This completes the proof of Theorem \ref{th1.3}.
\end{proof}

\end{document}